\title[Hyperbolic links are not generic]{Hyperbolic links are not generic}
\author{Andrei V. Malyutin}
\thanks{The research is supported by the Foundation for the Advancement of Theoretical Physics and Mathematics ``BASIS''}
\address{St.\,Petersburg Department of 
Steklov Institute of Mathematics\\
St.~Petersburg State University}
\email{malyutin@pdmi.ras.ru}
\def \crn {\operatorname{cr}}
\def \clos {\operatorname{clos}}
\def \card {\operatorname{card}}
\newcommand \sm {\setminus}
\newcommand \R  {\mathbb R}
\newcommand \dd {\partial}
\newcommand \inr{\operatorname{int}}
\newcommand \be     {\begin{equation}}
\newcommand \ee     {\end{equation}}
\newtheorem{thm}{Theorem}
\newtheorem{prop}{Proposition}
\newtheorem{lem}{Lemma}
\newtheorem{claim}{Claim}
\newtheorem{cor}{Corollary}
\theoremstyle{definition}
\newtheorem{definition}{Definition}
\theoremstyle{remark}
\newtheorem{remark}{Remark}
\begin{document}
\maketitle

\begin{abstract}
We show that if $K$ is a nontrivial knot then the proportion of satellites of~$K$ among all of the prime non-split links of $n$ or fewer crossings does not converge to $0$ as $n$ approaches infinity.
This implies in particular that the proportion of hyperbolic links among all of the prime non-split links of $n$ or fewer crossings does not converge to 1 as $n$ approaches infinity. We consider unoriented link types.
\end{abstract} 

\maketitle

\section{Introduction}

The concept of hyperbolicity has many incarnations and relates 
to a series of interesting phenomena in various areas of mathematics.
In geometric topology, hyperbolicity appears in particular in Thurston's classifications.
For instance, Thurston's trichotomy on knot complements says that 
the set of knots splits into  
the classes of satellite, torus, and hyperbolic ones.
Similar classifications are known for links, $3$-manifolds, surfaces and surface automorphisms, groups acting on $1$-dimensional manifolds, etc. 
These classifications sound counter-intuitive because each of the classes involved looks rather special and scarce at first glance, while the union of these seemingly insufficient classes forms the whole thing. 
In~some cases, the answer to this paradox is provided by hyperbolicity 
in the sense that hyperbolic (or hyperbolic-like) objects turn out to be generic. Well-known examples here are hyperbolic surfaces and pseudo-Anosov automorphisms in mapping class groups of surfaces. See~\cite{Mal18} for more details and references.

The present paper deals with the case of links.
Statistics show that the overwhelming majority of prime knots and links of small complexity is of hyperbolic type (see \cite{HTW98, Bur18, Mal18}), 
and it was widely believed for some time that generic prime knots and links are hyperbolic (see, e.\,g., \cite{Ad94, Rat06}).
However, in the cases of knots, links, and $3$-manifolds 
there is a counter-argument to the conjecture that hyperbolic elements are generic. 
The key point of this counter-argument is that the satellite structures in the mentioned cases can be local in a certain sense. 
Presumably, this locality property may or may not appear
in different representations of knots and links, so that distinct types of knots/links can be generic with respect to different representations and complexity measures. 
In~this paper, we consider the crossing number as a complexity measure for links and prove the following theorem.

\begin{thm}\label{TSat}
If $K$ is a nontrivial knot, then the proportion of satellites of~$K$ among all of the prime non-split links of $n$ or fewer crossings does not converge to~$0$ as $n$ approaches infinity. More precisely, if $\crn(K)$ is the crossing number of~$K$, $P_n$ is the number of prime non-split links of $n$ or fewer crossings, and $S_n(K)$ is the number of satellites of~$K$ among prime non-split links of $n$ or fewer crossings, then we have
\begin{equation*}
\limsup_{n\to\infty}\frac{S_n(K)}{P_n}~>~\frac{1}{1+(10.4)^{6(4\crn(K)+1)}} ~>~ 10^{-26\crn(K)}.
\end{equation*}
Furthermore, if $K$ is prime, then we have
\begin{equation*}
\limsup_{n\to\infty}\frac{S_n(K)}{P_n}~>~\frac{1}{1+(10.4)^{6\crn(K)}} ~>~ 10^{-7\crn(K)}.
\end{equation*}
\end{thm}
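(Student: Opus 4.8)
The plan is to split the theorem into two essentially independent pieces: a topological construction that manufactures many prime non-split satellites of~$K$ cheaply, and an elementary counting argument that converts this into a lower bound on the $\limsup$. Concretely, I would build an injection $\Phi$ from the set of all prime non-split links into the set of prime non-split links that are satellites of~$K$, inflating the crossing number by at most an additive constant $D$ depending only on~$K$, namely $\crn(\Phi(L))\le \crn(L)+D$ with $D=6\crn(K)$ when $K$ is prime and $D=6(4\crn(K)+1)$ in general. Granting such a~$\Phi$, every prime non-split link with at most $n$ crossings yields a distinct satellite of~$K$ with at most $n+D$ crossings, so that $S_{n+D}(K)\ge P_n$ for every~$n$.

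For the construction of~$\Phi$ I would fix a minimal diagram of $K$ and take a regular neighborhood $V$ of~$K$ as a companion solid torus. Given a prime non-split link $L$ with a minimal diagram drawn inside a small box, I would route $L$ through a decorated neighborhood of the diagram of $K$ so that the resulting pattern winds essentially around $V$ (nonzero winding, not isotopic to the core), with the diagram of $L$ inserted in one local tangle. Then $\Phi(L)$ lies in $V$ as a geometrically essential pattern, so $\partial V$ is an essential torus in the complement (using that $K$ is nontrivial) and $\Phi(L)$ is a satellite of~$K$. The wrapping is chosen so that $\Phi(L)$ stays prime and non-split, which is precisely why a connected-sum-type insertion does not suffice and the pattern must wind nontrivially; a careful bookkeeping then shows that each of the $\crn(K)$ crossings of the diagram of $K$ contributes at most six crossings to the diagram of $\Phi(L)$ (and the cabling used to handle composite~$K$ accounts for the $4\crn(K)+1$ term), giving the claimed bound on $\crn(\Phi(L))$. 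Injectivity of~$\Phi$ follows from uniqueness of the satellite decomposition: the companion torus is characteristic (JSJ), so from $\Phi(L)$ one recovers $K$ together with the pattern, hence~$L$.

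For the counting step I would use an explicit exponential upper bound $P_n\le C\,(10.4)^{n}$ on the number of prime non-split links, obtained by counting connected $4$-valent planar shadows with $n$ vertices together with the $2^n$ choices of crossing signs. Writing $\beta=10.4$ and combining $S_{n+D}(K)\ge P_n$ with $\frac{S_{n+D}(K)}{P_{n+D}}\ge\frac{P_n}{P_{n+D}}$ and passing to the $\limsup$ gives $\limsup_m\frac{S_m(K)}{P_m}\ge(\liminf_n\frac{P_{n+D}}{P_n})^{-1}$. The exponential upper bound forces $\liminf_n\frac{P_{n+D}}{P_n}\le 1+\beta^{D}$ along a suitable subsequence, since otherwise $P_n$ would eventually outgrow $C\beta^{n}$; this yields $\limsup_m\frac{S_m(K)}{P_m}>\frac{1}{1+\beta^{D}}$. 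Substituting $D=6\crn(K)$ and $D=6(4\crn(K)+1)$ produces the two displayed inequalities, and the crude estimate $(10.4)^{6}<10^{7}$ converts them into the stated powers-of-ten bounds.

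The main obstacle is the geometric construction, not the counting. The delicate point is to arrange all required properties of $\Phi(L)$ at once: it must be a genuine satellite of~$K$ (companion torus essential, so the insertion cannot be swallowed by a ball or a connected-sum sphere), it must remain prime and non-split, its crossing number must be governed by the explicit constant, and $L$ must be recoverable from~$\Phi(L)$. Balancing primeness against the satellite condition is the crux, since the cheapest insertions tend to produce connected sums rather than prime satellites; making the pattern wrap nontrivially repairs this but is exactly what forces the per-crossing blow-up and hence the constant~$6$ (respectively the cabling cost $4\crn(K)+1$ when $K$ is composite, where the characteristic torus must be pinned down to the correct companion). Verifying injectivity rigorously will require invoking uniqueness of the characteristic torus to read off the pattern and checking that distinct $L$ yield non-isotopic patterns.
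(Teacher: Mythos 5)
Your overall architecture (an additive-cost satellite construction, an injectivity claim, a subexponential growth estimate, and cabling to reduce composite $K$ to prime $K$) is the same as the paper's, but the two steps you yourself flag as "the crux" contain genuine gaps, and in the form you state them they cannot be repaired. The main one is injectivity: you claim $\Phi$ injects \emph{all} prime non-split links into satellites of~$K$, justified in one line by "the companion torus is characteristic (JSJ), so from $\Phi(L)$ one recovers the pattern, hence $L$". That is not a proof: a satellite complement generally contains several essential tori --- in particular, if $L$ is itself a satellite of~$K$, the complement of $\Phi(L)$ contains both the new companion torus and the image of the old one --- and JSJ uniqueness concerns the whole torus family up to isotopy, not the existence of a unique companion torus knotted as~$K$ along which to disentangle. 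The paper proves injectivity (assertion~(iii) of Proposition~\ref{lem:main}) only for links that are \emph{not themselves} $K$-entanglements, and only when $K$ is prime; this restriction is exactly why its counting inequality reads $S_{n+6\crn(K)}(K)\ge P_n-S_n(K)-N_n$, with the subtracted $S_n(K)$ term, rather than your $S_{n+D}(K)\ge P_n$. Proving even this restricted injectivity is the bulk of the paper (disjoint companion tori via Budney's proposition in Case~A, then a Seifert-fibered JSJ-chamber analysis in Case~B, where primeness of $K$ is essential to exclude Case~B$_{3c}$). Your unrestricted claim is unsupported, and the restricted version forces the modified counting argument you do not have.

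The second gap is the construction itself when $L$ is a \emph{knot}. For multicomponent $L$ your sketch (and the paper's) works: double two strands from distinct components around~$K$, and primeness follows from reliability. But wrapping number $\ge 2$ for a knot forces two parallel strands of the \emph{same} component through the $K$-pattern, and the result is a tangle sum whose primeness is not automatic: the paper needs $L$ to be a $\frac23$-regular knot, so that a minimal diagram admits a locally trivial $2$-string tangle decomposition (via~\cite{Mal18}), plus a separate lemma on wrapping numbers; it then needs Proposition~\ref{pr:NxRegular} --- resting on Lackenby's $\frac1{152}$-theorem \cite{La09} and Stoimenow's enumeration \cite{St04} --- to show the excluded non-regular knots are asymptotically negligible. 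Handling \emph{arbitrary} prime knots, as your $\Phi$ requires, is not known to be possible. Finally, a smaller but real error: $P_n\le C\,(10.4)^n$ does not follow from counting connected $4$-valent planar shadows times $2^n$ crossing choices --- that count grows roughly like $24^n$ --- so the explicit constants in the statement genuinely require Stoimenow's theorem, which is what the paper cites.
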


In the case of the trefoil knot~$3_1$ ($\crn(K)=3$), Theorem~\ref{TSat} yields the following.

\begin{cor}\label{CorHyp}
The proportion of hyperbolic links among all of the prime non-split links of $n$ or fewer crossings does not converge to 1 as $n$ approaches infinity.
More precisely, let $P_n$ {\rm(}\/resp., $H_n$, $S_n$\/{\rm)} denote the number of prime non-split {\rm(}\/resp., hyperbolic, prime non-split satellite\/{\rm)} links of~$n$ or fewer crossings.
Then
\begin{equation*}
\limsup_{n\to\infty}\frac{S_n}{P_n}~>~\frac{1}{1+(10.4)^{18}}~>~10^{-19},
\end{equation*}
and therefore
\begin{equation*}
\liminf_{n\to\infty}\frac{H_n}{P_n}~<~1-10^{-19}.
\end{equation*}
\end{cor}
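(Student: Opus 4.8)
The plan is to derive the corollary directly from Theorem~\ref{TSat} applied to the trefoil $3_1$, combined with the standard fact that the hyperbolic and satellite classes are disjoint among prime non-split links.

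For the first displayed inequality I would invoke the sharper, ``prime'' estimate of Theorem~\ref{TSat}, which is legitimate because the trefoil is a prime knot. As $\crn(3_1)=3$, we have $6\crn(3_1)=18$, so
\[
\limsup_{n\to\infty}\frac{S_n(3_1)}{P_n}~>~\frac{1}{1+(10.4)^{18}}.
\]
Every satellite of $3_1$ is in particular a prime non-split satellite link, whence $S_n(3_1)\le S_n$ for all $n$ and therefore $\limsup_{n\to\infty}S_n/P_n\ge\limsup_{n\to\infty}S_n(3_1)/P_n$. The residual numerical claim $1/(1+(10.4)^{18})>10^{-19}$ reduces to $(10.4)^{18}<10^{19}-1$, which I would confirm by the crude logarithmic bound $\log_{10}(10.4)<1.018$, giving $(10.4)^{18}<10^{18.33}<10^{19}-1$. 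This settles the first inequality.

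Next I would establish that hyperbolic and satellite links are disjoint. A satellite link contains an essential torus in its complement, so by Thurston's trichotomy it is not hyperbolic; conversely a hyperbolic link is automatically prime and non-split, since a split or composite link complement carries an essential sphere or essential annulus and so cannot be atoroidal and irreducible. Consequently, among the $P_n$ prime non-split links of at most $n$ crossings, the $H_n$ hyperbolic ones and the $S_n$ satellite ones are counted without overlap, so $H_n+S_n\le P_n$, i.e.\ $H_n/P_n\le 1-S_n/P_n$ for every $n$.

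Finally, passing to the lower limit and using the identity $\liminf_{n\to\infty}(1-S_n/P_n)=1-\limsup_{n\to\infty}(S_n/P_n)$ together with the first inequality gives
\[
\liminf_{n\to\infty}\frac{H_n}{P_n}~\le~1-\limsup_{n\to\infty}\frac{S_n}{P_n}~<~1-10^{-19}.
\]
I expect no serious obstacle here: the whole argument is a transfer of the quantitative bound of Theorem~\ref{TSat} through the inclusion $S_n(3_1)\le S_n$, an elementary numerical estimate, and the classical disjointness of the hyperbolic and satellite types, the last being the only genuinely topological ingredient.
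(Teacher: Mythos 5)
Your proposal is correct and follows exactly the route the paper intends (the paper states the corollary as an immediate consequence of Theorem~\ref{TSat} for the trefoil without writing out details): the prime case of the theorem with $\crn(3_1)=3$, the inclusion giving $S_n(3_1)\le S_n$, the numerical check $(10.4)^{18}<10^{19}-1$, and the disjointness of the hyperbolic and satellite classes among prime non-split links yielding $H_n+S_n\le P_n$ and hence the $\liminf$ bound. No gaps; this matches the paper's argument.
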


\begin{remark}
Theorem~\ref{TSat} does not imply that the proportion of hyperbolic knots among all of the prime non-split \underline{knots} of $n$ or fewer crossings does not converge to 1 as $n$ approaches infinity.
\end{remark}

\begin{remark}
A certain modification of our method strengthens the estimates of Theorem~\ref{TSat} and Corollary~\ref{CorHyp}. In particular, the constant $10^{-19}$ of Corollary~\ref{CorHyp} can be replaced with $10^{-13}$, and the constants $10^{-26}$ and $10^{-7}$ of Theorem~\ref{TSat} can be replaced with $10^{-17}$ and $10^{-5}$, respectively.
\end{remark}

The paper is organized as follows. 
Section~\ref{sec:main} describes the idea of the proof of Theorem~\ref{TSat} and 
presents a series of propositions involved in this proof. 
The subsequent sections contain proofs of these propositions.
Our constructions should be interpreted as being in either the PL or smooth category. For standard definitions, we mostly use the conventions of \cite{BZ06} and~\cite{BZH13}. 
There will be a certain abuse of language in order to avoid complicating the notation. 
In particular, a \emph{link} will be a smooth compact one-dimensional submanifold in the 3-sphere~$S^3$ or in~$\R^3$, a~pair $(S^3, L)$, or a class of homeomorphic pairs (cf.~\cite[p.~1]{BZ06}).
No orientations on links and spaces are placed if not otherwise stated.
In particular, when counting links, we consider a link as a class of homeomorphic non-oriented pairs~$(S^3,L)$.

\section{The idea of the proof of Theorem~\ref{TSat}}
\label{sec:main}

A central concept of our proof of Theorem~\ref{TSat} is that of~$K$-en\-tangle\-ments.

\begin{definition}[Entanglements and disentanglements of links]
\label{def:entang}
We recall that the \emph{wrapping number} of a link~$X$ in a solid torus~$U$ is the minimum number of intersections of~$X$ with any meridional disk of~$U$.
If~$U$ is embedded in the $3$-sphere~$S^3$, then there exists an essential curve in~$\partial U$ that bounds a $2$-sided surface in $S^3 \setminus \inr(U)$ (a~\emph{Seifert surface});
this curve is unique up to isotopy on~$\partial U$ and is called a \emph{longitude} of $U$ in~$S^3$ (see, e.\,g., \cite[Theorem~3.1]{BZ06}).
Now, let $L$ be a link in the $3$-sphere $S^3$, and let $V$ be an unknotted solid torus in~$S^3$ such that $L$ is contained in the interior of~$V$. 
Let $W$ be a tubular neighborhood of a knot~$K$ in~$S^3$, and let $\psi\colon V\to W$ be a homeomorphism.
If the wrapping number of~$L$ in~$V$ is nonzero 
then the link $\psi(L)\subset S^3$ is a \emph{satellite} of~$K$.
If the wrapping number of~$L$ in~$V$ is at least~$2$
and $\psi$ is \emph{untwisted} in the sense that it maps a longitude of~$V$ to a longitude of~$W$,
we say that $\psi(L)$ is a \emph{$K$-en\-tangle\-ment} for~$L$ and $L$ is a \emph{$K$-disentanglement} for~$\psi(L)$.
If, in addition, a pair of distinct components of~$L$ have nonzero wrapping number in~$V$ each, then we say that $\psi(L)$ is a \emph{reliable $K$-en\-tangle\-ment} for~$L$.
\end{definition}

\begin{remark}
Definitions obviously imply that a prime non-split link~$L$ is a $K$-en\-tangle\-ment if and only if $L$ is a proper satellite of~$K$.
\end{remark}

Another concept playing a key role in our proof is as follows.

\begin{definition}[Regular knots; see \cite{Mal18}]
If $P$ is a knot and $x$ is a real number,
we say that $P$ is \emph{$x$-regular} if we have $x\cdot\crn(P)\le\crn(K)$ 
whenever $P$ is a factor of a knot~$K$.
If there exists a knot $K$ such that $P$ is a factor of $K$ and $\crn(K)< x\cdot\crn(P)$, we say that $P$ is \emph{non-$x$-regular}.
\end{definition}

\begin{remark}
If the conjecture that the crossing number of knots is additive with respect to connected sum is true, then each knot is $1$-regular.
Results of \cite{La09} imply that each knot is $\frac1{152}$-regular.
If $x<y$ then each $y$-regular knot is $x$-regular.
See \cite{Mal18} for more details.
\end{remark}

\begin{prop}\label{lem:main}
Let $K$ be a nontrivial knot. 
\begin{itemize}
\item[\rm(i)] 
All $K$-en\-tangle\-ments of non-split links are non-split.
\item[\rm(ii)] 
All reliable $K$-en\-tangle\-ments of prime non-split links are prime.
\item[\rm(iii)] 
If $K$ is prime, then the $K$-en\-tangle\-ments over distinct links $L_1$ and~$L_2$ are distinct whenever $L_1$ and~$L_2$ are not $K$-en\-tangle\-ments.
\item[\rm(iv)] If a prime non-split link~$L$ is not a non-$\frac23$-regular knot, then there exists a \underline{prime} $K$-en\-tangle\-ment $L'$ for~$L$ with $\crn(L')\le \crn(L)+6\crn(K)$. 
\end{itemize}
\end{prop}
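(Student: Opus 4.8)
The engine behind parts (i) and (ii) is that, because $K$ is nontrivial, the exterior $E=\clos(S^3\sm W)$ is the complement of a nontrivial knot and hence $\partial W$ is incompressible in $E$; moreover every meridian disk of $W$ meets the satellite $\psi(L)$ in at least the wrapping number of $L$ in $V$. For (i) I would take a hypothetical splitting sphere $S$ for $L'=\psi(L)\subset\inr W$, make it transverse to $\partial W$, and minimize $|S\cap\partial W|$; isotopies supported near $\partial W$ (which is disjoint from $L'$) remove every circle that is trivial on $\partial W$, so in minimal position all circles are essential, and they cut $S$ into annuli and at least two disk caps. A cap in $E$ violates incompressibility of $\partial W$, while a cap in $W$ has meridional boundary, so it is a meridian disk and meets $L'$ --- impossible for a splitting sphere. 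Hence $S\cap\partial W=\emptyset$, so $S$ lies in $E$ (where it cannot separate the connected set $L'$) or in $\inr W$ (where $\psi^{-1}(S)$ would split the non-split $L$); either way we reach a contradiction, and $L'$ is non-split.

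For (ii), by (i) the link $L'$ is non-split, so it remains to exclude nontrivial connected sums. I would apply the same normalization to a sphere $S$ meeting $L'$ transversally in two points, which by parity lie on a single component. Here the wrapping number $\ge 2$ does the work: each disk cap in $W$ is a meridian disk meeting $L'$ in at least two points, so two caps would force $|S\cap L'|\ge 4>2$; thus again $S\cap\partial W=\emptyset$ and $S\subset\inr W$. Then $S_0=\psi^{-1}(S)$ meets $L$ in two points and, $V$ being irreducible, bounds a ball $B\subset V$. Primeness of $L$ makes one of the two balls bounded by $S_0$ a trivial ball--arc pair. If it is the inner ball $B$, then $\psi(B)$ exhibits $S$ as a trivial decomposing sphere. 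If it is the outer ball, then all components of $L$ except the one meeting $S$ lie in $B$; surgering a meridian disk of $V$ off the sphere $S_0$ produces a meridian disk disjoint from $B$, so these components have wrapping number $0$, leaving at most one component of $L$ with positive wrapping number and contradicting reliability. Hence $L'$ is prime.

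For (iii), suppose $L'=\psi_1(L_1)=\psi_2(L_2)$ with companion solid tori $W_1,W_2$ of type $K$. The plan is to show that $\partial W_1$ and $\partial W_2$ are isotopic in $S^3\sm L'$: both are incompressible as in (i), so they may be isotoped to be disjoint, and the hypotheses that $K$ is prime and that neither pattern $L_i$ is itself a $K$-en\-tangle\-ment rule out a nested or alternative $K$-companion and force uniqueness of this torus through the torus/JSJ decomposition. Given an ambient isotopy $h$ with $h(L')=L'$ and $h(W_1)=W_2$, the composite $\psi_2^{-1}h\psi_1$ carries $(V_1,L_1)$ to $(V_2,L_2)$; since $\psi_i$ preserves meridians automatically and longitudes by untwistedness, this homeomorphism takes meridian to meridian and longitude to longitude on the boundary, hence extends to an ambient homeomorphism of $S^3$ and yields $L_1=L_2$. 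Establishing uniqueness of the $K$-companion torus under these hypotheses is the main obstacle in this part.

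For (iv), I would build the entanglement explicitly from minimal diagrams. Routing the pattern twice around $K$ through an untwisted $\psi$ gives a diagram of $L'$ in which each crossing of a minimal diagram of $K$ is replaced by the crossings of two parallel strands (at most $4\crn(K)$) together with a framing correction making $\psi$ untwisted (at most $2\crn(K)$ more), while the pattern contributes $\crn(L)$; this yields $\crn(L')\le\crn(L)+6\crn(K)$. Primeness then splits according to the hypothesis. If $L$ has at least two components, I arrange two of them to wrap around the core, so that $L'$ is a \emph{reliable} $K$-en\-tangle\-ment, and invoke (ii). If $L$ is a knot, it is $\frac23$-regular by hypothesis and reliability is unavailable, so I rule out a decomposition $L'=A\#B$ by pushing the decomposing sphere into $W$ as in (ii): the only surviving possibility is a local summand together with a $K$-summand, which the $\frac23$-regularity of $L$ excludes through a crossing-number inequality against the bound $\crn(L')\le\crn(L)+6\crn(K)$. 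Pinning down the constant $6$ and deploying $\frac23$-regularity to secure primeness in the knot case are the two places where I expect the real work.
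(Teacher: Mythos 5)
Your arguments for (i) and (ii) are essentially the paper's own (incompressibility of the companion torus from nontriviality of~$K$ plus positive wrapping number, an innermost-circle/minimality reduction, and transport of the sphere through the re-embedding), and in (iv) your construction, the count $\crn(L')\le\crn(L)+6\crn(K)$, and the appeal to reliability and (ii) in the multicomponent case also match the paper. The two steps you yourself flag as ``the real work'' are genuine gaps, however, and in both cases the missing ingredient is structural, not a computation you postponed.

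In (iii), your one concrete assertion toward uniqueness --- that the two companion tori are incompressible, ``so they may be isotoped to be disjoint'' --- is false in general: incompressible tori in an irreducible link complement can intersect essentially. This is exactly the dichotomy the paper is organized around. When the tori can be made disjoint (its Case A), the conclusion is not that they are isotopic: one needs the engulfing proposition of \cite{Bud06} (disjoint knot exteriors lie in disjoint balls, each the exterior plus a $2$-handle along a meridian disk) together with the Gordon--Luecke theorem to show that $L_1$ would then itself be a $K$-entanglement, contrary to hypothesis. When they cannot be made disjoint (its Case B), both tori become vertical tori in a single Seifert-fibered JSJ chamber of the complement, and the paper works through Budney's classification of such chambers; this is the only place where primeness of~$K$ enters (one subcase forces $K$ to be composite). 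Your sketch has no counterpart to Case~B, only a gesture toward Case~A, and it also skips the preliminary reduction to the case where the common entanglement is non-split.

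In the knot case of (iv), the plan cannot be repaired as stated: no crossing-number inequality can exclude your ``surviving possibility,'' because it is realizable within your own bound. Route two adjacent, coherently oriented strands of a minimal diagram of~$L$ around~$K$; equivalently, take as pattern an unknotted winding-number-$2$ curve with a local $L$-summand tied inside a ball. Then $L'=L\,\sharp\,(\text{$2$-cable of }K)$ is composite, is a genuine $K$-entanglement (wrapping number~$2$, by homology), and still satisfies $\crn(L')\le\crn(L)+6\crn(K)$; meanwhile $\frac23$-regularity of~$L$ only yields $\frac23\crn(L)\le\crn(L')$, which is consistent with all of this, and Lackenby's bound is far too weak to help. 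Regularity must instead be used \emph{before} the construction, to choose where the diagram is cut: the paper invokes Proposition~8.1 of \cite{Mal18} to position the minimal diagram so that the complementary tangle is locally trivial, and then deduces primeness from tangle-primality results (Lickorish's theorem \cite{Lick81} and the cable-tangle lemmas of \cite{Mal18}), with no crossing-number estimate involved. Finally, for a one-component pattern you must also prove that $L'$ is a $K$-entanglement at all: a knot diagram winding twice around the annulus can have winding number~$0$ and, a priori, wrapping number~$0$ in the solid torus. The paper devotes a separate lemma on wrapping numbers (proved by a universal-cover and minimal-position argument) to securing wrapping number~$2$; your proposal does not address this point.
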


\begin{prop}
\label{pr:NxRegular}
The proportion of non-$\frac34$-regular prime knots among all of the prime non-split links of $n$ or fewer crossings converges to $0$ as $n$ approaches infinity.
\end{prop}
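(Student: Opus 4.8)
The plan is to bound the number $N_n$ of non-$\frac34$-regular prime knots with at most $n$ crossings by an exponential in $\frac34 n$, and then to compare this with an exponential lower bound for $P_n$; the factor $\frac34$ in the exponent is exactly what will create the gap that forces $N_n/P_n\to 0$.

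First I would set up a boundedly-many-to-one counting map. By definition, each non-$\frac34$-regular prime knot $P$ is a connected-sum factor of some knot $K$ with $\crn(K)<\frac34\crn(P)$; fix one such witness $K(P)$. If $\crn(P)\le n$ then $\crn(K(P))<\frac34 n$, so $P\mapsto K(P)$ maps the non-$\frac34$-regular prime knots of at most $n$ crossings into the set of all knots of fewer than $\frac34 n$ crossings. The fibre over a fixed $K$ consists of those prime factors $P$ of $K$ with $\crn(P)>\frac43\crn(K)$; since, by Lackenby's bound (the $\frac1{152}$-regularity quoted above), the crossing numbers of the prime factors of $K$ sum to at most $152\,\crn(K)$, fewer than $152/(4/3)=114$ of them can exceed $\frac43\crn(K)$. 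Writing $g(m)$ for the number of knot types with at most $m$ crossings, this gives $N_n\le 114\, g\!\left(\frac34 n\right)$. (A polynomial-in-$n$ fibre bound would already suffice, since the bound on the number of prime factors of $K$ is $O(n)$.)

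Second I would invoke exponential growth estimates for the two relevant counts. For the numerator I need a rigorous upper bound $g(m)\le c_1 D^m$, obtained by counting reduced connected knot diagrams, that is, $2^m$ over/under choices on the reduced $4$-valent planar shadows realizing the minimal crossing number. For the denominator I need a rigorous lower bound $P_n\ge c_2\Lambda^n$, obtained from an explicit rich family of prime non-split links (prime alternating links, whose growth rate is the relevant large constant, rather than, say, rational links, which grow too slowly). Combining the two steps yields
\[
\frac{N_n}{P_n}\ \le\ \frac{114\,c_1}{c_2}\left(\frac{D^{3/4}}{\Lambda}\right)^{n},
\]
which tends to $0$ as soon as $D^{3/4}<\Lambda$, i.e. $D<\Lambda^{4/3}$.

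The core difficulty is thus entirely quantitative: one must secure explicit constants with $D^{3/4}<\Lambda$. Two features make this delicate and explain the shape of the statement. First, the comparison cannot be made against prime knots alone, since their growth rate is too close to $D^{3/4}$; the extra room provided by multi-component prime non-split links is genuinely used, which is why the denominator of the proposition is $P_n$ and not the number of prime knots. Second, the diagram-counting bound for $g$ must be pushed below $\Lambda^{4/3}$, which is precisely the kind of careful enumeration behind the constant (of order $10.4$) that controls growth elsewhere in the paper; the exponent $\frac34$ supplies exactly the slack that a comparison at exponent $1$ would lack.
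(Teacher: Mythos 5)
Your overall skeleton --- map each non-$\frac34$-regular prime knot $P$ to a composite witness $K(P)$ with $\crn(K(P))<\frac34\crn(P)$, bound the fibres of this map by Lackenby's theorem (your bound $<152\cdot\frac34=114$ is exactly the paper's), and then compare exponential growth rates --- is precisely the skeleton of the paper's proof. The genuine gap is in the step you yourself describe as ``entirely quantitative'': you need the number $g(m)$ of \emph{all} knot types with at most $m$ crossings to satisfy $g(m)\le c_1D^m$ with $D<\Lambda^{4/3}\approx 11.25$ (taking $\Lambda>6.14$ from Sundberg--Thistlethwaite). No such bound follows from the counting you propose. Counting ``$2^m$ over/under choices on reduced $4$-valent planar shadows'' gives a growth constant of roughly $12\cdot 2=24$ (the number of $4$-valent planar maps with $m$ vertices grows like $12^m$), and $24^{3/4}\approx 10.8>6.14$, so the comparison fails. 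The refined constant $10.4$ of Stoimenow, which you invoke as the model for the needed enumeration, is a bound on \emph{prime} link diagrams (prime diagrams modulo flypes); it does not bound the number of arbitrary knot types. And your witnesses $K(P)$ are unavoidably \emph{composite} knots ($P$ is a proper factor of $K(P)$, since $\crn(K(P))<\crn(P)$ forces $K(P)\neq P$), so you cannot restrict the codomain to prime knots; while bounding composite types through connected sums of primes pushes the rigorous growth constant up to roughly $2\cdot 10.4$ in the worst case, again well above $11.25$.

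The missing idea is the paper's notion of \emph{strongly diagrammatically prime} knots, which is exactly the device that makes the quantitative step possible. The paper does not fix an arbitrary witness: it takes a witness $K$ of \emph{minimal} crossing number among all knots having $P$ as a factor with $\crn(K)<\frac34\crn(P)$, and shows, using Schubert's unique factorization, that every minimal diagram of such a $K$ must be a prime \emph{diagram} --- otherwise a diagrammatic summand containing $P$ would be a strictly smaller witness. Hence the codomain of the counting map is not the set of all knots but the set of strongly diagrammatically prime knots, whose count \emph{is} controlled by the enumeration of prime diagrams, giving growth rate at most $10.4$ and therefore $(10.4)^{3/4}<5.8<6.14$. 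Without this structural lemma, the inequality $D^{3/4}<\Lambda$ you need is not merely delicate: with the enumeration bounds currently known (and with the counting method you propose), it is false or out of reach, so the proposal as written does not close.
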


Propositions~\ref{lem:main} and~\ref{pr:NxRegular} will be proved in subsequent sections. 
Now, we deduce Theorem~{\rm\ref{TSat}} from these propositions. 
The argument is similar to that of Proposition~3.6 in~\cite{Mal18}. 

\begin{proof}[Proof of Theorem~{\rm\ref{TSat}}]
We use the following notation:
\begin{itemize}
\item[$P_n$] is the number of prime non-split links of $n$ or fewer crossings,
\item[$N_n$] is the number of non-$\frac23$-regular knots of $n$ or fewer crossings, and
\item[$S_n(K)$] is the number of satellites of~$K$ among prime non-split links of $n$ or fewer crossings.
\end{itemize}
  
First, we consider the case where $K$ is prime.
By definition, each $K$-en\-tangle\-ment is a satellite of~$K$.
Then assertions~(iii) and~(iv) of Proposition~\ref{lem:main} imply that for all~$n$ we have 
$$
S_{n+6\crn(K)}(K)~\ge~P_n-S_n(K)-N_n.
$$
This is equivalent to the following inequality
\begin{equation*}
\frac{S_{n+6\crn(K)}(K)}{P_{n+6\crn(K)}}~\ge~ \left(1-\frac{S_n(K)+N_n}{P_n}\right)\cdot\frac{P_n}{P_{n+6\crn(K)}},
\end{equation*}
which implies that
\begin{multline}
\label{eq:PHPHlsli}
\limsup_{n\to\infty}\frac{S_{n+6\crn(K)}(K)}{P_{n+6\crn(K)}}~\ge~ 
\liminf_{n\to\infty}\left(1-\frac{S_n(K)+N_n}{P_n}\right)\cdot\limsup_{n\to\infty}\frac{P_n}{P_{n+6\crn(K)}}\\
~=~ \left(1-\limsup_{n\to\infty}\frac{S_n(K)+N_n}{P_n}\right)\cdot\limsup_{n\to\infty}\frac{P_n}{P_{n+6\crn(K)}}.
\end{multline}
Since we have 
$$\limsup\limits_{n\to\infty}\frac{S_{n+6\crn(K)}(K)}{P_{n+6\crn(K)}}~=~\limsup\limits_{n\to\infty}\frac{S_n(K)}{P_n}$$
while by Proposition~\ref{pr:NxRegular} we have
$$
\limsup_{n\to\infty}\frac{N_n}{P_n}~=~\lim_{n\to\infty}\frac{N_n}{P_n}~=~0,
$$
it follows that \eqref{eq:PHPHlsli} is equivalent to the inequality
\begin{equation}
\label{eq:PHPHxls}
\limsup_{n\to\infty}\frac{S_n(K)}{P_n}~\ge~\frac{1}{1+\left({\limsup\limits_{n\to\infty}\frac{P_n}{P_{n+6\crn(K)}}}\right)^{-1}}~=~\frac{1}{1+{\liminf\limits_{n\to\infty}\frac{P_{n+6\crn(K)}}{P_n}}}.
\end{equation}
We observe that
\begin{multline*}
\liminf\limits_{n\to\infty}\frac{P_{n+6\crn(K)}}{P_n}
~\le~
\limsup_{n\to\infty}\left(\sqrt[n]{P_n}\right)^{6\crn(K)}\\
~\le~
\limsup_{n\to\infty}\left(\sqrt[n]{n\cdot p_n}\right)^{6\crn(K)}
~=~
\limsup_{n\to\infty}\left(\sqrt[n]{p_n}\right)^{6\crn(K)},
\end{multline*}
where $p_n$ is the number of prime non-split links of precisely~$n$ crossings (so that $P_n=p_1+\cdots+p_n$).
An estimate for $\limsup\limits_{n\to\infty}\sqrt[n]{p_n}$ comes from a paper of Alexander Stoimenow~\cite{St04}, whose results imply that
$$
\limsup_{n\to\infty}\sqrt[n]{p_n}~\le~\frac{\sqrt{13681}+91}{20}~<~10.4.
$$
Therefore, we have
\begin{equation}
\label{eq:17-1}
\liminf_{n\to\infty}\frac{P_{n+6\crn(K)}}{P_n}~<~(10.4)^{6\crn(K)}.
\end{equation}
Plugging~\eqref{eq:17-1} into~\eqref{eq:PHPHxls} produces the required
$$
\limsup_{n\to\infty}\frac{S_n(K)}{P_n} ~>~
\frac{1}{1+(10.4)^{6\crn(K)}} ~>~ 10^{-7\crn(K)}.
$$

In the case where $K$ is composite, we apply \emph{cable knots}.
A~knot~$Y$ is said to be a \emph{cable knot} over a knot~$X$ if $Y$ is a proper satellite of $X$ contained in the boundary torus~$\dd V$ of a tubular neighborhood~$V$ of a representative of~$X$.
We recall that 
\begin{itemize}
\item each nontrivial knot~$X$ has a cable knot~$Y$ over~$X$ with $\crn(Y)\le 4\crn(X)+1$ ($Y$~can be obtained by ``doubling'' a minimal diagram of~$X$ and ``adding'' a crossing);
\item each cable knot over~$X$ is a prime satellite of~$X$ (see \cite[p.~250, Satz~4]{Schu53}, \cite[Cor.~2]{Gra91}).
\end{itemize}
Let $K_2$ be a cable knot over~$K$ with $\crn(K_2)\le 4\crn(K)+1$.
Since $K_2$ is prime, by the first part of the present proof we have 
$$
\limsup_{n\to\infty}\frac{S_n(K_2)}{P_n} ~>~\frac{1}{1+(10.4)^{6\crn(K_2)}}.
$$
Since $K_2$ is a satellite of~$K$, it follows that each satellite of~$K_2$ is a satellite of~$K$, so that we have $S_n(K)\ge S_n(K_2)$.
Thus we have (given that $\crn(K)\ge 6$ because~$K$ is composite)
\begin{multline*}
\limsup_{n\to\infty}\frac{S_n(K)}{P_n} ~\ge~
\limsup_{n\to\infty}\frac{S_n(K_2)}{P_n} ~>~
\frac{1}{1+(10.4)^{6\crn(K_2)}}\\
~\ge~
\frac{1}{1+(10.4)^{6(4\crn(K)+1)}}~\ge~
\frac{1}{1+(10.4)^{24\crn(K)+6}}
\\
~\ge~
\frac{1}{1+(10.4)^{25\crn(K)}}~\ge~
10^{-26\crn(K)}.\qedhere
\end{multline*}
\end{proof}

\section{Proof of Assertion (i) of Proposition~\ref{lem:main}}
\label{sec:proof-i}

We show that no en\-tangle\-ment of a non-split link is split.
Suppose to the contrary that a split link~$L$ is a $K$-en\-tangle\-ment for a non-split link~$L'$.
By the definition of entanglements, this implies that there exist an embedded solid torus~$V$ in $S^3$ and a re-embedding $\phi\colon V\to S^3$ such that the following conditions hold:
\begin{itemize}
\item $V$ is a tubular neighborhood of a closed curve representing~$K$;
\item $L$ lies in the interior of~$V$ and the wrapping number of~$L$ in~$V$ is at least~$2$;
\item the solid torus $\phi(V)$ is unknotted;
\item $\phi$ maps a longitude of $V$ to a longitude of $\phi(V)$; 
\item we have $\phi(L)=L'$.
\end{itemize}

We observe that $V$ is nontrivially knotted because otherwise we have $L=L'$.

\begin{claim}
\label{cl:V-incompressible}
The torus $\dd V$ is incompressible in~$S^3\setminus L$.
\end{claim}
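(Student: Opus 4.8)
The plan is to prove the claim by ruling out a compressing disk: for a torus, incompressibility in $S^3\setminus L$ is equivalent to the nonexistence of an embedded disk $D$ with $D\cap\partial V=\partial D$ and $\partial D$ essential on $\partial V$. The first observation I would make is that any such disk lies on a single side of $\partial V$. Indeed, the interior of $D$ is connected and disjoint from $\partial V$, hence contained in one of the two components of $S^3\setminus\partial V$, namely the open solid torus $\inr(V)$ or the exterior $E:=S^3\setminus\inr(V)$. Since $L\subset\inr(V)$, a disk lying in $E$ is automatically disjoint from $L$, so it suffices to exclude a compressing disk on each of the two sides separately.

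For the exterior side, I would invoke the fact established just before the claim, that $V$ is nontrivially knotted. The exterior $E$ is then the complement of a nontrivial knot, and its boundary torus is incompressible: if some essential curve of $\partial V$ bounded a disk in $E$, then compressing $\partial E$ along that disk would exhibit $E$ as a solid torus, so that the core $K$ of $V$ would have solid-torus complement and hence be unknotted. This contradicts the nontriviality of $K$, by the standard characterization of the unknot as the only knot whose complement is a solid torus. Therefore no compressing disk lies in $E$.

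For the interior side, I would use the hypothesis on the wrapping number. An essential simple closed curve on $\partial V$ that bounds a disk inside the solid torus $V$ must be a meridian, so a compressing disk $D\subset\inr(V)\setminus L$ would be a meridional disk of $V$ disjoint from $L$, i.e. one meeting $L$ in $0$ points. But the wrapping number of $L$ in $V$ is at least $2$, which means that \emph{every} meridional disk of $V$ meets $L$ in at least two points (the minimum over such disks is a lower bound for each of them). This rules out a compressing disk on the interior side as well.

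Combining the two cases shows that $\partial V$ admits no compressing disk in $S^3\setminus L$, so it is incompressible. The only nonroutine input is the exterior case, where the entire content is carried by the previously recorded fact that $V$ is knotted together with the unknot characterization; the interior case is immediate from the assumption that the wrapping number is at least $2$.
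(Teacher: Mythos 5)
Your proof is correct and follows essentially the same route as the paper: the paper also splits into the two sides of $\partial V$, citing boundary incompressibility of nontrivial knot complements for the exterior and the fact that compressing disks in a solid torus are meridional (hence meet $L$ by the wrapping number hypothesis) for the interior. The only difference is that you reprove the exterior-side fact via the solid-torus characterization of the unknot, where the paper simply cites the literature.
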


It is well known that the complement of any nontrivial knot is boundary incompressible while the only compressing disks in a solid torus are meridional ones (see, e.\,g., \cite[Propositions~3.10 and~3.12 and Exercise E~2.9]{BZH13} and \cite[p.~15, Example before Lemma 1.10]{Hat00}). Since the wrapping number of $L$ in $V$ is nonzero, it follows that no compressing disk for~$\dd V$ is contained in~$V\sm L$. 
This proves the claim.

Now, let $S^2$ be a splitting sphere for~$L$ in~$S^3\sm L$.
It may be assumed that $S^2$ intersects $\dd V$ transversely in simple closed curves and that the intersection $S^2\cap \dd V$ has the minimal number of components among all splitting spheres for~$L$. 

If $S^2\cap \dd V=\emptyset$, then $S^2$ lies in~$V$ because $V$ contains~$L$.
This readily implies that $\phi(S^2)$ splits~$\phi(L)=L'$ because~$S^2$ bounds a ball in~$V$. A contradiction.

If $S^2\cap \dd V\neq\emptyset$, then a component of~$S^2\sm \dd V$ is a disk.
We denote the closure of this disk by~$D$. 
Since $\dd V$ is incompressible in~$S^3\setminus L$ (see Claim~\ref{cl:V-incompressible}), it follows that $\dd D$ bounds a disk, say~$d$, in~$\dd V$.

If the sphere $D\cup d$ splits~$L$, then $D\cup d$ is isotopic to a splitting sphere for~$L$ that does not intersect~$\dd V$, which contradicts the assumed minimality of the number of components in $S^2\cap \dd V$.

If $D\cup d$ does not split~$L$, then $D\cup d$ bounds a ball in $S^3\sm L$.
Therefore, by an isotopy of $S^2$ in a neighborhood of this ball, we can eliminate the component $\dd D$ of $S^2\cap \dd V$. This contradicts the same assumption on the minimality of the number of components in $S^2\cap \dd V$.
This contradiction completes the proof of assertion~(i) of Proposition~\ref{lem:main}.


\section{Proof of Assertion (ii) of Proposition~\ref{lem:main}}
\label{sec:proof-ii}

We show that the reliable $K$-en\-tangle\-ments of prime non-split links are prime.

Suppose to the contrary that a composite link~$L$ is a reliable $K$-en\-tangle\-ment for a prime non-split link~$L'$.
By the definition of reliable entanglements, this implies that there exist an embedded solid torus~$V$ in $S^3$ and a re-embedding $\phi\colon V\to S^3$ such that the following conditions hold:
\begin{itemize}
\item $V$ is a tubular neighborhood of a closed curve representing~$K$;
\item $L$ lies in the interior of~$V$ and a pair of distinct components of~$L$ have nonzero wrapping number in~$V$ each;
\item the solid torus $\phi(V)$ is unknotted;
\item $\phi$ maps a longitude of $V$ to a longitude of $\phi(V)$; 
\item we have $\phi(L)=L'$.
\end{itemize}

We observe that $V$ is nontrivially knotted because otherwise we have ${L=L'}$.
Since~$L$ is composite, it follows that $S^3$ contains a sphere~$S^2$ intersecting~$L$  transversely in two points such that neither of~$(B_1,B_1\cap L)$ and~$(B_2,B_2\cap L)$, where $B_1$ and $B_2$ are the closures of the components of~$S^3\sm S^2$, is a trivial one-string tangle.
It may be assumed that $S^2$ intersects $\dd V$ transversely in simple closed curves and that the intersection $S^2\cap \dd V$ has the minimal number of components among all decomposition spheres for~$L$. 

If $S^2\cap \dd V=\emptyset$, then $S^2$ lies in~$V$ because $V$ contains~$L$.
Then~$S^2$ bounds a ball, say~$B_1$, in~$V$.
We observe that at least two components of~$L$ intersect $V\sm B_1$ because  
a pair of distinct components of~$L$ have nonzero wrapping number in~$V$ while if a component is contained in~$B_1$ then it has zero wrapping number in~$V$.
Therefore, the tangle $(\phi(B_1),\phi(B_1)\cap L')$ is not a trivial one-string tangle because it is equivalent to~$(B_1,B_1\cap L)$ and the tangle 
$$(\clos(S^3\sm\phi(B_1)),\clos(S^3\sm\phi(B_1))\cap L')$$ is not a trivial one-string tangle because $\clos(S^3\sm\phi(B_1)$ intersects at least two components of~$L'$. This means that $L'$ is composite in contradiction to our assumptions.


If $S^2\cap \dd V\neq\emptyset$, then a pair of distinct component of~$S^2\sm \dd V$ are disks. We denote the closures of these disks by~$D_1$ and~$D_2$. 
Since $S^2$ intersects $L$ in two points, it follows that at least one of these disks intersects $L$ in at most one point.
Without loss of generality we assume that $D_1$ intersects $L$ in at most one point. Since $\dd V$ is incompressible in~$S^3\setminus L$ (see Claim~\ref{cl:V-incompressible}), it follows that $\dd D_1$ bounds a disk, say~$d$, in~$\dd V$. Since~$D_1\cup d$ is a sphere, $D_1$ intersects $L$ in at most one point, and $d\subset \dd V$ does not intersect~$L$, it follows that~$D_1\cup d$
does not intersect~$L$. 
By assertion~(i) of the proposition, $L$ is non-split.
Therefore, $D_1\cup d$ bounds a ball in $S^3\sm L$.
Consequently, by an isotopy of $S^2$ in a neighborhood of this ball, we can eliminate the component $\dd D_1$ of $S^2\cap \dd V$, which contradicts the assumption on the minimality of the number of components in $S^2\cap \dd V$.
This contradiction completes the proof of assertion~(ii) of Proposition~\ref{lem:main}.

\section{Proof of Assertion (iii) of Proposition~\ref{lem:main}}
\label{sec:proof-iii}

We show that if $K$ is a prime knot then the $K$-en\-tangle\-ments over distinct links that are not $K$-en\-tangle\-ments are distinct.

Suppose to the contrary that there exist links $L_1$, $L_2$, and~$L$ such that
$L_1\neq L_2$, 
$L$ is a $K$-en\-tangle\-ment for each of~$L_1$ and~$L_2$, and 
neither of~$L_1$ and~$L_2$ is a $K$-en\-tangle\-ment.
By the definition of entanglements, this means that there exist embedded solid tori $V_1$ and $V_2$ in $S^3$ and re-embeddings $\phi_1\colon V_1\to S^3$ and $\phi_2\colon V_2\to S^3$ such that, for each $i\in\{1,2\}$, the following conditions hold:
\begin{itemize}
\item $V_i$ is a tubular neighborhood of a closed curve representing~$K$;
\item $L$ lies in the interior of $V_i$ and the wrapping number of $L$ in $V_i$ is at least~$2$;
\item the solid torus $\phi_i(V_i)$ is unknotted;
\item $\phi_i$ maps a longitude of $V_i$ to a longitude of $\phi_i(V_i)$;
\item we have $\phi_i(L)=L_i$.
\end{itemize}

\begin{claim}
\label{cl:non-split}
If $L$ is split then it has a (unique) non-split component that is a $K$-en\-tangle\-ment for two distinct links that are not $K$-en\-tangle\-ments.
\end{claim}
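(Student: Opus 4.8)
The plan is to isolate, inside the split link~$L$, a single non-split piece that carries all the essential wrapping and to show that this piece is the component promised by the claim. Write $L=\Lambda_1\sqcup\dots\sqcup\Lambda_m$ for the decomposition of~$L$ into its maximal non-split sublinks (its \emph{non-split components}); this decomposition is intrinsic to~$L$. Fixing $i\in\{1,2\}$, I first want to show that exactly one of the~$\Lambda_j$ has nonzero wrapping number in~$V_i$, that this distinguished component carries the full wrapping number of~$L$ in~$V_i$ (hence at least~$2$), and that each remaining component is contained in a ball in~$V_i$.

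For this I would argue as in Sections~\ref{sec:proof-i} and~\ref{sec:proof-ii}. By Claim~\ref{cl:V-incompressible} the torus $\dd V_i$ is incompressible in $S^3\sm L$, so any splitting sphere for~$L$ may be isotoped off~$\dd V_i$ and therefore into $\inr(V_i)$, where it bounds a ball. If two components $\Lambda_a,\Lambda_b$ both had nonzero wrapping number, a splitting sphere separating them, pushed into~$V_i$, would bound a ball in~$V_i$ containing one of them, forcing that one to have wrapping number~$0$, a contradiction. Since the wrapping number of~$L$ in~$V_i$ is at least~$2$, exactly one component wraps; call it~$L^*_i$. The same sphere-pushing argument puts each other component into a ball of~$V_i$ disjoint from~$L^*_i$, and a meridional disk realizing the wrapping number of~$L^*_i$ can be made disjoint from these balls, so the wrapping number of~$L^*_i$ in~$V_i$ equals that of~$L$, namely at least~$2$. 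In particular $L^*_i$ is itself a $K$-en\-tangle\-ment (for $\phi_i(L^*_i)$), via the restriction of the data $(V_i,\phi_i)$.

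Next I would identify $L^*_1$ with $L^*_2$. The re-embedding $\phi_i$ carries the balls of~$V_i$ to balls in $\phi_i(V_i)\subset S^3$; hence $\phi_i$ preserves the link type of every non-wrapping component and realizes $L_i=\phi_i(L)$ as the split union of $\phi_i(L^*_i)$ with copies of the remaining components. Here I would invoke the auxiliary fact that \emph{a split link having a $K$-en\-tangle\-ment as one of its split summands is itself a $K$-en\-tangle\-ment}, proved by placing the remaining summands in small balls inside the entangling torus without changing the wrapping number. Now suppose $L^*_1\ne L^*_2$. Then $L^*_1$ is a non-wrapping component of~$V_2$, so it lies in a ball there and $\phi_2$ preserves its type; thus $\phi_2(L^*_1)$, a link of the same type as the $K$-en\-tangle\-ment~$L^*_1$, is a split summand of~$L_2$, whence $L_2$ would be a $K$-en\-tangle\-ment, contrary to hypothesis. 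Therefore $L^*_1=L^*_2=:L^*$, a non-split component of~$L$, unique as the wrapping component for both tori.

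Finally, $L^*$ is a $K$-en\-tangle\-ment for each of $\phi_1(L^*)$ and $\phi_2(L^*)$, and these two disentanglements are distinct: since $L^*$ is the unique wrapping component for both~$V_1$ and~$V_2$, the non-wrapping part~$R$ is the same sublink in both cases and $L_i=\phi_i(L^*)\sqcup R$, so $\phi_1(L^*)=\phi_2(L^*)$ would force $L_1=L_2$, contradicting $L_1\ne L_2$. Moreover neither $\phi_i(L^*)$ is a $K$-en\-tangle\-ment, for it is a split summand of~$L_i$, and were it an entanglement the auxiliary fact would make~$L_i$ one too. This yields the claim. The main obstacle I anticipate is the careful bookkeeping of wrapping numbers under the ball-pushing isotopies and the proof of the auxiliary fact, both of which rest on the incompressibility of~$\dd V_i$ established in Claim~\ref{cl:V-incompressible}.
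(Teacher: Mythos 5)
Your proposal is correct and takes essentially the same approach as the paper: both use the incompressibility of $\dd V_i$ (Claim~\ref{cl:V-incompressible}) to push splitting spheres into~$V_i$ and so isolate the unique non-split component of~$L$ carrying the full wrapping number (the paper via an induction that repeatedly splits off ball pieces, you via the decomposition into maximal non-split sublinks), and both force the two distinguished components to coincide by the observation that a split link having a $K$-entanglement as a split summand is itself a $K$-entanglement---the very fact the paper invokes implicitly in its final sentence. Your write-up additionally spells out the closing verifications (that $\phi_1(L^*)$ and $\phi_2(L^*)$ are distinct and that neither is a $K$-entanglement), which the paper leaves tacit.
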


Indeed, the proof of assertion (i) of Proposition~\ref{lem:main} shows that if $L$ is split then $V_1$ contains a ball~$B$ whose boundary splits~$L$.
Since $B\cap L$ has zero wrapping number in~$V_1$, it follows that the wrapping number of $L\sm B$ in $V_1$ equals the wrapping number of $L$ in~$V_1$. Therefore, $L\sm B$ is a $K$-en\-tangle\-ment for $\phi_1(L\sm B)$ and $L_1$ is the union of $\phi_1(L\sm B)$ and $\phi_1(L\cap B)=L\cap B$.
If $L\sm B$ is split, we repeat the argument. 
Induction shows that $L$ has a (unique) non-split component $L'_1$ that is a $K$-en\-tangle\-ment for~$\phi_1(L'_1)$ and $L_1$ is the union of $\phi_1(L'_1)$ and $\phi_1(L\sm L'_1)=L\sm L'_1$.
Applying this argument for~$V_2$ yields that $L$ has a (unique) non-split component $L'_2$ that is a $K$-en\-tangle\-ment for~$\phi_2(L'_2)$ and $L_2$ is the union of $\phi_2(L'_2)$ and $\phi_2(L\sm L'_2)=L\sm L'_2$.
It remains to notice that $L'_1=L'_2$ because otherwise $L_1$ is a $K$-en\-tangle\-ment of a union of $\phi_1(L'_1)$, $\phi_2(L'_2)$, and $L\sm (L'_1\cup L'_2)$. The claim is proved.

Thus, it is enough to consider the case where $L$ is nonsplit, which we assume in the rest of the proof.

\begin{claim}
\label{cl:Vi-incompressible}
The tori $\partial V_1$ and $\partial V_2$ are both incompressible in~$S^3\setminus L$.
\end{claim}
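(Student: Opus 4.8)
The plan is to prove Claim~\ref{cl:Vi-incompressible} by essentially repeating the argument already used for Claim~\ref{cl:V-incompressible}, applied in turn to each of the two solid tori $V_1$ and $V_2$. The key observation is that the hypotheses recorded for $V_i$ in the displayed list are exactly the ones needed: $V_i$ is a tubular neighborhood of a closed curve representing the nontrivial knot~$K$, and the wrapping number of~$L$ in~$V_i$ is at least~$2$, hence in particular nonzero.

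First I would recall the two standard facts invoked in the proof of Claim~\ref{cl:V-incompressible}: the complement of any nontrivial knot is boundary incompressible, and the only compressing disks in a solid torus are the meridional ones. Since $K$ is nontrivial, the solid torus $V_i$ is a tubular neighborhood of a nontrivially knotted curve, so $\partial V_i$ admits no compressing disk on the \emph{outside}, i.e.\ in $\clos(S^3\setminus V_i)$. On the \emph{inside}, any compressing disk for $\partial V_i$ contained in $V_i$ must be meridional; but a meridional disk of $V_i$ meets~$L$, because the wrapping number of~$L$ in~$V_i$ is nonzero. Hence no compressing disk for $\partial V_i$ lies in $V_i\setminus L$ either. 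Combining the two sides, $\partial V_i$ is incompressible in $S^3\setminus L$.

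Running this argument for $i=1$ and then for $i=2$ yields the claim for both tori simultaneously. The point deserving care is that the hypothesis "wrapping number at least~$2$" is stronger than what is strictly needed here (nonzero would suffice for incompressibility), so I would simply use the nonvanishing of the wrapping number and not worry about the stronger bound, which is reserved for later parts of the argument.

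I do not expect a serious obstacle in this step, since it is a verbatim reuse of Claim~\ref{cl:V-incompressible} with the subscript~$i$ attached and the two tori treated independently; the genuine difficulties of the proof of assertion~(iii) will arise afterwards, when one must compare the positions of $\partial V_1$ and $\partial V_2$ relative to each other and to~$L$, and deduce that the two entanglement structures coincide. The only thing to confirm is that each of the listed conditions for $V_i$ indeed supplies precisely the two inputs (nontrivial knotting and nonzero wrapping number) that the argument consumes, which they do.
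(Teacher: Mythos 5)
Your proposal is correct and is exactly what the paper does: its entire proof of this claim is the reference ``See Claim~\ref{cl:V-incompressible}'', i.e.\ the same argument (boundary incompressibility of nontrivial knot complements outside, meridional-only compressing disks inside, blocked by the nonzero wrapping number of~$L$) applied to each $V_i$ in turn.
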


See Claim~\ref{cl:V-incompressible}.

\begin{claim}
\label{cl:isotV1V2j}
There is no isotopy between $\partial V_1$ and $\partial V_2$ in $S^3\setminus L$.
\end{claim}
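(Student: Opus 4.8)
The plan is to argue by contradiction: assuming that $\partial V_1$ and $\partial V_2$ are isotopic in $S^3\sm L$, I will produce a homeomorphism of $S^3$ carrying $L_1$ onto $L_2$, contradicting the standing hypothesis $L_1\neq L_2$. To begin, I would upgrade the given isotopy to an ambient one: by the isotopy extension theorem, an isotopy of $\partial V_1$ onto $\partial V_2$ taking place inside the open manifold $S^3\sm L$ is realized by an ambient isotopy supported in $S^3\sm L$. Letting $h\colon S^3\to S^3$ denote its time-one map, we obtain a homeomorphism that fixes $L$ pointwise and satisfies $h(\partial V_1)=\partial V_2$.

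The first key step is to show that $h(V_1)=V_2$. The torus $\partial V_2$ bounds exactly two compact regions in $S^3$: the solid torus $V_2$, and the exterior $\clos(S^3\sm V_2)$, which is the complement of a tubular neighborhood of the \emph{nontrivial} knot $K$ and hence is \emph{not} a solid torus. Since $h(V_1)$ is a solid torus whose boundary is $\partial V_2$, it must coincide with $V_2$; this is the one place where nontriviality of $K$ is used. Next I would track the longitudes. The longitude of $V_i$ is characterized as the unique slope on $\partial V_i$ bounding a surface in $\clos(S^3\sm V_i)$. As $h$ restricts to a homeomorphism $\clos(S^3\sm V_1)\to\clos(S^3\sm V_2)$, it carries a Seifert surface for the longitude of $V_1$ to a surface bounded by some slope on $\partial V_2$; by uniqueness, $h$ sends the longitude of $V_1$ to the longitude of $V_2$.

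With this in hand, set $W_i:=\phi_i(V_i)$ and define the homeomorphism $g:=\phi_2\circ(h|_{V_1})\circ\phi_1^{-1}\colon W_1\to W_2$. Then $g(L_1)=\phi_2(h(L))=\phi_2(L)=L_2$; moreover $g$ carries the meridian of $W_1$ to the meridian of $W_2$ (being a homeomorphism of solid tori), and—combining the previous step with the hypothesis that each $\phi_i$ maps a longitude of $V_i$ to a longitude of $W_i$—it carries the longitude of $W_1$ to the longitude of $W_2$. Because $W_1$ and $W_2$ are \emph{unknotted}, the meridian of each complementary solid torus $\clos(S^3\sm W_i)$ is precisely the longitude of $W_i$. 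Hence $g|_{\partial W_1}$ matches the meridians of $\clos(S^3\sm W_1)$ and $\clos(S^3\sm W_2)$ and therefore extends across these complementary solid tori, yielding a homeomorphism $\hat g\colon S^3\to S^3$ with $\hat g(L_1)=L_2$. Thus $L_1$ and $L_2$ are the same link type, the desired contradiction.

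I expect the main obstacle to be the bookkeeping in this last step: verifying that $g$ respects the meridian--longitude framing on the boundary so that the gluing across the complementary solid tori is legitimate. The two ingredients that make it work are the nontriviality of $K$ (which forces $h(V_1)=V_2$, and with it the longitude-to-longitude property of $h$) and the untwistedness of the re-embeddings $\phi_i$; both must be invoked together to pin down the framing and guarantee that the extension $\hat g$ exists.
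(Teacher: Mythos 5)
Your proof is correct and follows essentially the same route as the paper: isotopy extension to get an ambient homeomorphism fixing $L$, matching up the solid tori and their longitudes, and then extending $\phi_2\circ h\circ\phi_1^{-1}$ across the complementary solid tori of the unknotted $W_i$ (which is exactly the paper's ``Dehn filling along $\ell_i$'' step). Your treatment is in fact slightly more explicit than the paper's on two points it leaves implicit: why $h(V_1)=V_2$ (nontriviality of $K$) and why $h$ carries longitude to longitude.
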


Suppose to the contrary that such an isotopy exists.
The classical Isotopy Extension Theorem (for smooth manifolds) says that  
if $A$ is a compact submanifold of a manifold~$M$ and $F\colon A\times I\to M$ is an isotopy of~$A$ with $F(A\times I)\subset \inr(M)$,
then $F$ extends to an ambient isotopy (i.\,e., a diffeotopy of $M$) having compact support (see, e.\,g., \cite[p.\,179]{Hir76}).
Applying this theorem to the isotopy between $\partial V_1$ and $\partial V_2$ in $S^3\setminus L$ implies that
there exists an ambient isotopy of $S^3$, fixing $L$ pointwise, that moves $\partial V_1$ to $\partial V_2$.
This yields an isotopy between $V_1$ and $V_2$ that fixes $L$ pointwise.
Then the triples $(V_1,L,\ell_1)$ and $(V_2,L,\ell_2)$, where $\ell_i$ is a longitude of~$V_i$, $i=1,2$, are homeomorphic, i.\,e.,
there exists a homeomorphism $\tau\colon V_1\to V_2$ such that $\tau(L)=L$ and $\tau(\ell_1)=\ell_2$.
This implies that the pairs $(S^3,\phi_1(L))$ and $(S^3,\phi_2(L))$ are homeomorphic. 
Indeed, we observe that $(S^3,\phi_i(L))$ is obtained from $(V_i, L)$ by a \emph{Dehn filling} along $\ell_i$, that is, 
$(S^3,\phi_i(L))$ is obtained by attaching a solid torus $V$ to $V_i$ by a gluing homeomorphism $\sigma_i\colon \partial V\to \partial V_i$ such that $\sigma_i^{-1}(\ell_i)$ bounds a meridional disk of~$V$.  
Thus, the homeomorphism $\tau\colon V_1\to V_2$ extends to a homeomorphism $S^3\to S^3$ that maps $\phi_1(L)$ to $\phi_2(L)$.
This contradicts the assumption that the links $L_1$ and $L_2$ are distinct because we have $\phi_i(L)=L_i$ by construction. 
Claim~\ref{cl:isotV1V2j} is proved.

\begin{claim}
\label{cl:noV1subpsetV}
There is no isotopy of $S^3$ that fixes $L$ pointwise and moves $V_1$ to a position where either $V_1\subset V_2$ or $V_2\subset V_1$.
\end{claim}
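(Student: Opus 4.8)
The plan is to argue by contradiction: assume such an isotopy exists, apply it, and thereby reduce to the situation $V_1\subset\inr(V_2)$ (the case $V_2\subset V_1$ is symmetric, with the roles of the indices swapped). Since the isotopy fixes $L$ pointwise, it carries the entanglement data producing $L_1$ to an equivalent one (by the Dehn-filling observation already used for Claim~\ref{cl:isotV1V2j}), and it preserves Claims~\ref{cl:Vi-incompressible} and~\ref{cl:isotV1V2j}; in particular $\dd V_1$ and $\dd V_2$ stay incompressible in $S^3\sm L$ and remain non-isotopic there. After a further small push we may take the nesting to be proper, $V_1\subset\inr(V_2)$, with $L\subset\inr(V_1)$ and both wrapping numbers still at least~$2$.

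First I would record that the core $c_1$ of $V_1$ is geometrically essential in $V_2$, i.e.\ lies in no ball of $V_2$: otherwise $L\subset V_1$ would sit in a ball of $V_2$ and the wrapping number of $L$ in $V_2$ would be $0$, contradicting that it is at least~$2$. Hence $\dd V_2$ is incompressible in $S^3\sm c_1$, because a compression to the inside would confine $c_1$ to a ball of $V_2$, while a compression to the outside is impossible, as $S^3\sm\inr(V_2)$ is the exterior of the nontrivial knot~$K$ and is therefore boundary-incompressible. The heart of the argument is then a dichotomy according to whether $\dd V_2$ is boundary-parallel in the knot exterior $S^3\sm\inr(V_1)$. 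If it is, it cobounds a product region $T^2\times I$ with $\dd V_1$; this region is exactly $A:=\clos(V_2\sm V_1)$, which is disjoint from $L$ (since $L\subset\inr(V_1)$), so the product structure supplies an isotopy of $\dd V_1$ onto $\dd V_2$ inside $S^3\sm L$, contradicting Claim~\ref{cl:isotV1V2j}. If $\dd V_2$ is \emph{not} boundary-parallel, then it is an essential torus in the exterior of $c_1$; as $c_1$ and the core $c_2$ of $V_2$ both represent~$K$, this exhibits $K$ as a proper satellite of~$K$, which is impossible for a nontrivial knot because proper companionship is a strict partial order on knot types. Either horn yields a contradiction, proving the claim.

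The step I expect to be the main obstacle is the second horn: converting ``$\dd V_2$ is an essential torus in $S^3\sm c_1$'' into ``$K$ is a proper satellite of $K$'', and then discarding the latter. I would make the first half precise by verifying that $\dd V_2$ is incompressible from both sides and not boundary-parallel, so that it is a genuine companion torus with companion $c_2\simeq K$ and pattern $c_1\simeq K$ of nonzero wrapping number; for the impossibility I would appeal to the classical companionship theory (via additivity-type genus bounds for the satellite operation, or the finiteness and uniqueness of the torus decomposition of $S^3\sm K$), which forbids an infinite strictly descending companion chain and hence forbids $K$ from being its own proper companion. A secondary point requiring care is the reduction in the first paragraph, namely checking that applying the hypothesized isotopy really transports the full entanglement set-up and the incompressibility conclusions intact, and that the nesting may be taken strict.
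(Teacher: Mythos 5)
Your proposal is correct and takes essentially the same route as the paper: establish that $\dd V_2$ is an essential (incompressible, non-boundary-parallel) torus in the knot exterior $\clos(S^3\sm V_1)$ — with non-boundary-parallelism coming from Claim~\ref{cl:isotV1V2j} exactly as in your product-region argument — conclude that $K$ would be a proper companion of itself, and rule this out by finiteness of essential tori (the paper does this concretely by iterating a homeomorphism $f\colon \clos(S^3\sm V_1)\to\clos(S^3\sm V_2)$ to produce infinitely many disjoint, pairwise non-parallel incompressible tori, contradicting Haken's finiteness principle). One caveat: of your two suggested classical justifications for that last step, only the torus-decomposition/finiteness one is sound — the ``additivity-type genus bounds'' route fails here, since such bounds involve the winding number of the pattern, which can be zero even when the wrapping number is at least~$2$ (as for Whitehead-type patterns), so genus gives no control over the companion.
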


Suppose to the contrary that such an isotopy exists.
We consider the case with $V_1\subset V_2$. 
(The case with $V_2\subset V_1$ is equivalent.)
Since there is no isotopy between $\partial V_1$ and $\partial V_2$ in $\clos(S^3\sm V_1)\subset S^3\setminus L$ (Claim~\ref{cl:isotV1V2j}), it then follows that $\dd V_2$ is an essential (incompressible, non-boundary parallel) torus in~$\clos(S^3\sm V_1)$.
This means that $K$ is a nontrivial satellite (and companion) of itself.
However, this is impossible.
Indeed, let $f\colon \clos(S^3\sm V_1)\to \clos(S^3\sm V_2)$ be a homeomorphism. Then the infinite sequence 
$$
f(\dd V_2), \quad f(f(\dd V_2)), \quad f(f(f(\dd V_2))), \quad \dots
$$ 
yields an infinite collection of disjoint incompressible connected surfaces which are pairwise nonparallel. 
This contradicts the well-known Finiteness Principles (Haken's Parallelism Principle) saying that any Haken manifold contains only a bounded number of disjoint incompressible boundary incompressible connected surfaces which are pairwise nonparallel and that the number of equivalence classes of incompressible tori is finite for any Haken manifold (see, e.\,g., \cite[Theorems 6.3.10 and 6.4.44]{Mat07}; \cite[p.~140]{Hem76} and \cite[p.~28]{BS16} for the case of link complements).

We split the further proof in the following two cases:

\begin{itemize}
\item[\bf Case A.] There exists an isotopy of $\partial V_1$ in $S^3\setminus L$ that moves $\partial V_1$ to a position where $\partial V_1 \cap \partial V_2=\emptyset$.
\item[\bf Case B.] No isotopy of $\partial V_1$ in $S^3\setminus L$ moves $\partial V_1$ to a position where $\partial V_1 \cap \partial V_2=\emptyset$.
\end{itemize}

\subsection{Proof in Case A}

In Case A, the Isotopy Extension Theorem (see above) implies that
there exists an ambient isotopy of $S^3$, fixing $L$ pointwise, that moves $V_1$ to a position where $\partial V_1\cap \partial V_2=\emptyset$.
Thus, we can assume without loss of generality that $\partial V_1 \cap \partial V_2 = \emptyset$ (while $V_1$ and $V_2$ satisfy all of the properties listed at the beginning of the proof). Now, let $M_1$ and $M_2$ denote the closures of the complements $S^3\setminus V_1$ and $S^3\setminus V_2$, respectively.
Then Claim~\ref{cl:noV1subpsetV} implies the following.

\begin{claim}
\label{cl:disj}
In Case A, $M_1$ and $M_2$ are disjoint.
\end{claim}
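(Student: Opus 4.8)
The plan is to prove Claim~\ref{cl:disj} directly, by contradiction, leveraging the disjointness of the boundaries $\partial V_1$ and $\partial V_2$ together with the non-nesting conclusion of Claim~\ref{cl:noV1subpsetV}. First I would record the topological setup: since $\partial V_1\cap\partial V_2=\emptyset$ (the normalization achieved at the start of Case~A) and both are tori in $S^3$, each torus lies entirely in the interior or the complement of the solid torus bounded by the other. The key organizing principle is that a torus disjoint from $\partial V_2$ lies either in $V_2$ or in $M_2=\clos(S^3\setminus V_2)$, and symmetrically for $\partial V_1$. Thus there are only finitely many combinatorial positions to rule out.

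Next I would suppose for contradiction that $M_1\cap M_2\neq\emptyset$, and argue that this forces one of the solid tori to be contained in the other. Concretely, $M_1\cap M_2\neq\emptyset$ means the complementary regions overlap; I would translate this into a statement about the relative position of $V_1$ and $V_2$. The four a priori possibilities are: $V_1\subset V_2$, $V_2\subset V_1$, $V_1\cap V_2=\emptyset$, and $V_1\cup V_2=S^3$ (i.e.\ $M_1\subset V_2$ or $M_2\subset V_1$). I expect that $M_1\cap M_2\neq\emptyset$ is incompatible with the two nesting possibilities $V_1\subset V_2$ and $V_2\subset V_1$ — because if, say, $V_1\subset V_2$ then $M_2\subset M_1$, so $M_1\cap M_2=M_2\neq\emptyset$ is automatic, which shows I must instead argue the \emph{converse}: that $M_1\cap M_2=\emptyset$ is what the non-nesting gives. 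So the cleaner formulation is to show that the only way $M_1$ and $M_2$ can \emph{fail} to be disjoint, given that their boundaries are disjoint, is precisely one of the nested configurations forbidden by Claim~\ref{cl:noV1subpsetV}.

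The main technical step is therefore the case analysis of how two unknotted (as re-embeddings) but knotted-in-place solid tori with disjoint boundaries can sit inside $S^3$. I would use the standard fact that in $S^3$, given two disjoint embedded tori $T_1,T_2$, either they cobound a region homeomorphic to $T^2\times I$ (ruled out here by Claim~\ref{cl:isotV1V2j}, which forbids parallelism) or one is contained in a ball disjoint from the other or one is nested inside the solid torus bounded by the other. Combining the no-parallelism conclusion of Claim~\ref{cl:isotV1V2j} with the no-nesting conclusion of Claim~\ref{cl:noV1subpsetV}, I would narrow the possibilities down to $V_1\cap V_2=\emptyset$, which immediately gives $M_1\supset V_2$ and $M_2\supset V_1$ — but then $M_1$ and $M_2$ together cover $S^3$, and I would need to check their interiors are still disjoint, i.e.\ that $V_1$ and $V_2$ being disjoint and filling implies $\inr(M_1)\cap\inr(M_2)=\emptyset$. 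The forward-looking worry is that ``$M_1$ and $M_2$ are disjoint'' as stated may really mean $\inr(M_1)\cap\inr(M_2)=\emptyset$ (since $L\subset\inr(V_i)$ keeps $L$ out of both $M_i$), and pinning down this boundary convention will be where I need to be careful.

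The hardest part will be ruling out all configurations except the genuinely disjoint one \emph{cleanly}, without appealing to ambient isotopy moves that might violate the constraint that $L$ stays fixed. I expect to lean on Claim~\ref{cl:isotV1V2j} to exclude parallelism and on Claim~\ref{cl:noV1subpsetV} to exclude both nestings, so that the residual case $V_1\cap V_2=\emptyset$ yields $M_1\cap M_2=\partial V_1\sqcup\partial V_2$-free interiors, i.e.\ disjoint $M_1,M_2$. Should a subtle configuration survive — for instance, one torus sitting in a ball inside the other's complement in a way that still makes $M_1\cap M_2$ nonempty on boundary — I would fall back on the incompressibility of both $\partial V_i$ in $S^3\setminus L$ (Claim~\ref{cl:Vi-incompressible}) to show such a ball cannot exist while keeping wrapping numbers at least $2$. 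This incompressibility plus the non-parallelism and non-nesting should jointly force the clean disjoint picture asserted in Claim~\ref{cl:disj}.
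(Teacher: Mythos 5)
Your case analysis starts correctly --- with $\partial V_1\cap\partial V_2=\emptyset$ there are exactly four configurations: $V_1\subset V_2$, $V_2\subset V_1$, $V_1\cap V_2=\emptyset$, and $V_1\cup V_2=S^3$ (equivalently $M_1\cap M_2=\emptyset$) --- but you then settle on the wrong residual case, and this inverts the conclusion. You assert that after excluding the two nestings via Claim~\ref{cl:noV1subpsetV} the surviving configuration is $V_1\cap V_2=\emptyset$, and that this ``yields disjoint $M_1,M_2$.'' That implication is false: if the closed solid tori $V_1$ and $V_2$ are disjoint, then $M_1\supset V_2$, $M_2\supset V_1$, and $M_1\cap M_2\supset S^3\setminus(V_1\cup V_2)$, which is nonempty (two disjoint closed solid tori cannot cover the connected space $S^3$). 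So in the case you keep, the claim would be refuted rather than proved: disjointness of the $V_i$ and disjointness of the $M_i$ are mutually exclusive configurations, and you have swapped them.

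The missing, decisive observation is the one fact you never invoke: $L$ lies in the interior of both $V_1$ and $V_2$, so $\inr(V_1)\cap\inr(V_2)\supset L\neq\emptyset$, which kills the case $V_1\cap V_2=\emptyset$ outright. This is exactly how the paper's (one-line) deduction goes: Claim~\ref{cl:noV1subpsetV} excludes both nestings, the containment of $L$ excludes disjoint $V_i$'s, and the only configuration left is $V_1\cup V_2=S^3$ with $M_2\subset\inr(V_1)$ and $M_1\subset\inr(V_2)$, i.e., $M_1\cap M_2=\emptyset$ --- genuine disjointness of the closed complements, with no boundary-convention subtlety (this matters, since the next step of the paper uses $M_2\subset V_1$ to form $\phi_1(M_2)$). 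Your appeals to Claim~\ref{cl:isotV1V2j} (non-parallelism) and to incompressibility as a fallback are not needed here; once the enumeration is set up, the argument is a two-line elimination, not a delicate analysis of torus positions.
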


Another fact that we need is implied by the following proposition.

\begin{prop}
\label{prop:Budney}
Let $C_1$, $C_2$, \dots, $C_n$ be $n$ disjoint submanifolds of $S^3$ 
such that $K_i =\clos(S^3 \setminus C_i)$ is a nontrivially embedded solid torus in~$S^3$ for all $i \in \{1, 2, \dots , n\}$. 
Then there exist $n$ disjointly embedded $3$-balls $B_1$, $B_2$, \dots, $B_n \subset S^3$ such that $C_i \subset B_i$ for all $i \in \{1, 2, \dots , n\}$. 
Moreover, each $B_i$ can be chosen to be $C_i$ union a $2$-handle which is a tubular neighborhood of a meridional disk for $K_i$.
\end{prop}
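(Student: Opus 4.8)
The plan is to reduce the statement to a statement about meridional disks and then to produce those disks by an innermost-disk argument together with an induction on~$n$.

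First I would dispose of the ``moreover'' clause and reduce the whole proposition to a disk-disjointness problem. For a single index, attaching to $C_i$ a $2$-handle $N(D_i)$ that is a tubular neighborhood of a meridional disk $D_i$ of $K_i$ yields a $3$-ball: the solid torus $K_i$ decomposes as this $2$-handle together with a complementary $3$-handle (a $3$-ball), so $B_i:=C_i\cup N(D_i)=\clos(S^3\sm(\text{3-handle}))$ is a ball containing $C_i$, of exactly the required form. Choosing the handles thin, one checks that $B_i\cap B_j$ splits into the four pieces $C_i\cap C_j$, $C_i\cap N(D_j)$, $N(D_i)\cap C_j$, and $N(D_i)\cap N(D_j)$; the first vanishes by hypothesis, so it suffices to arrange $D_i\cap C_j=\emptyset$, $D_j\cap C_i=\emptyset$, and $D_i\cap D_j=\emptyset$ for all $i\neq j$. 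Thus everything reduces to a good choice of meridional disks.

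The heart of the matter is that at least one solid torus admits a meridional disk disjoint from all the other exteriors. As in Claim~\ref{cl:V-incompressible}, since each $K_i$ is knotted the torus $\dd C_i=\dd K_i$ is incompressible in the irreducible manifold $C_i$. The hypothesis forces $C_j\subset\inr(K_i)$ for all $i\neq j$, so all the tori $\dd C_j$ with $j\neq i$ lie in $\inr(K_i)$, and the core region $M:=\bigcap_i K_i=S^3\sm\bigcup_i\inr(C_i)$ has boundary $\bigsqcup_i\dd C_i$. I would take any meridional disk $D$ of some $K_i$, make it transverse to $\bigcup_{j\neq i}\dd C_j$, and remove inessential intersection circles one at a time: an innermost such circle bounds a disk on $D$ and a disk on $\dd C_j$ (incompressibility on the $C_j$-side, a disk in the solid torus on the $K_j$-side), and the resulting sphere bounds a ball by irreducibility of $S^3$, across which $D$ is isotoped. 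If the intersection empties, $D$ works for the index~$i$. Otherwise the process stops at an innermost circle essential on some $\dd C_j$; incompressibility of $\dd C_j$ in $C_j$ puts the subdisk it bounds on the $K_j$-side, where an essential boundary curve must be a meridian, so the subdisk is a meridional disk of $K_j$, and its interior, being disjoint from every $\dd C_l$ and adjacent to $\dd C_j$ from inside $K_j$, lies in $\inr(M)$ and hence misses every $C_l$ with $l\neq j$. Either way, some index admits the desired disk.

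With this key step I would induct on~$n$, carrying the statement in the relative form in which the exteriors and the engulfing balls are all required to lie inside a prescribed ball. Given such a configuration, I pick a good index $j$ with a meridional disk $d_j$ of $K_j$ avoiding all other $C_l$; then $B_j=C_j\cup N(d_j)$ is a ball missing the other exteriors, and its complementary ball $\beta_j=\clos(S^3\sm B_j)$ has $\bigcup_{l\neq j}C_l\subset\inr(\beta_j)$. Applying the inductive hypothesis inside $\beta_j$ to the remaining $n-1$ exteriors yields disjoint balls $B_l\subset\inr(\beta_j)$, automatically disjoint from $B_j=\clos(S^3\sm\beta_j)$, and together with $B_j$ these are the desired $n$ disjoint balls, each of the prescribed form. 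I expect the main obstacle to be precisely the essential-circle case above, which is where the knottedness of the $K_i$ is genuinely used: one must guarantee that an essential intersection produces a meridional disk of one of the solid tori sitting in~$M$ rather than some uncontrolled surface, and this rests squarely on the incompressibility of the tori $\dd C_j$ in the knot exteriors. A secondary, more routine obstacle is the containment bookkeeping in the induction, namely keeping each recursively constructed ball inside $\beta_j$; this is handled by the usual innermost-disk and irreducibility arguments but must be done carefully so that disjointness is preserved at every stage.
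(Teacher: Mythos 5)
Your key geometric step is correct, and it is the heart of the matter: for disjoint nontrivial knot exteriors $C_1,\dots,C_n$, incompressibility of each $\dd C_j$ in $C_j$ (as in Claim~\ref{cl:V-incompressible}) plus an innermost-circle argument shows that some $K_j$ admits a meridional disk whose interior lies in $\inr\bigl(\bigcap_i K_i\bigr)$ and hence misses every other exterior, an essential innermost circle being forced to the solid-torus side where it must be a meridian. The reduction of the proposition to finding such disks, pairwise disjoint and each missing the other exteriors, is also fine. For the record, the paper does not prove Proposition~\ref{prop:Budney} at all --- its ``proof'' is the citation \cite[Proposition~2.1]{Bud06} --- so the comparison here is with the standard argument, which yours essentially reproduces.

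However, your induction is genuinely broken, and not for the reason you flagged. You carry a single prescribed ball $\beta$, and after constructing $B_j=C_j\cup N(d_j)$ you apply the inductive hypothesis inside $\beta_j=\clos(S^3\sm B_j)$. The trouble is that $\beta_j$ is not contained in $\beta$: since $B_j\subset\inr(\beta)$, the ball $\beta_j$ contains $\clos(S^3\sm\beta)$ and, deeper in the recursion, contains all balls built at earlier stages. So the balls delivered by the inner application, constrained only to lie in $\inr(\beta_j)$, need not lie in $\beta$ and need not avoid the earlier balls; the inductive step therefore establishes something strictly weaker than the relative statement it assumes. Concretely, already for $n=3$ your scheme yields $B_2\cap B_1=\emptyset$ and $B_3\cap B_2=\emptyset$ but nothing preventing $B_3\cap B_1\neq\emptyset$, because $B_1\subset\inr(\beta_2)$. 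No single prescribed ball can encode the true constraint: the region to be avoided at stage $k$ is a disjoint union of $k-1$ balls, whose complement is not a ball. The repair is routine but necessary: strengthen the statement to allow a finite collection of pairwise disjoint forbidden balls $A_1,\dots,A_m$ disjoint from the exteriors; after your key step produces a good disk for some $K_j$, remove its intersections with the spheres $\dd A_k$ by innermost-disk surgery along subdisks of those spheres (every circle on a sphere bounds a disk there, the swapped-in pieces stay disjoint from the exteriors and from the other forbidden balls, and the final disk, being connected with boundary on $\dd K_j$, lies outside every $A_k$); then recurse on the remaining $n-1$ exteriors with forbidden collection $A_1,\dots,A_m,B_j$. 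With that restructuring your argument is complete and coincides with the standard proof behind \cite[Proposition~2.1]{Bud06}.
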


\begin{proof}
See {\cite[Proposition\,2.1, p.~324]{Bud06}} and references therein for earlier proofs.
\end{proof}

Applying Proposition~\ref{prop:Budney} to $M_1$ and $M_2$, we obtain the following claim.

\begin{claim}
\label{cl:2121}
In Case A, there exists a meridional disk $D_2$ for $V_2$ such that $D_2 \subset V_1$.
\end{claim}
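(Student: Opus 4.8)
The plan is to use Proposition~\ref{prop:Budney} in the disjoint two-manifold setting provided by Claim~\ref{cl:disj}. In Case~A we have arranged $\partial V_1 \cap \partial V_2 = \emptyset$, and $M_1 = \clos(S^3 \setminus V_1)$ and $M_2 = \clos(S^3 \setminus V_2)$ are disjoint. Recall that $V_1$ and $V_2$ are tubular neighborhoods of nontrivially knotted representatives of~$K$ (we noted at the outset of the proof that each $V_i$ is knotted), so each $M_i = \clos(S^3 \setminus V_i)$ is \emph{not} itself a solid torus; rather it is the complement of the solid torus~$V_i$. To match the hypotheses of Proposition~\ref{prop:Budney} I would set $C_1 = M_1$ and $C_2 = M_2$, so that $K_i = \clos(S^3 \setminus C_i) = V_i$ is a nontrivially embedded solid torus for $i \in \{1,2\}$, exactly as the proposition requires.

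First I would apply Proposition~\ref{prop:Budney} with $n = 2$ to the disjoint submanifolds $C_1 = M_1$ and $C_2 = M_2$. This yields two disjointly embedded $3$-balls $B_1, B_2 \subset S^3$ with $M_i \subset B_i$, and, crucially, the ``Moreover'' clause lets me take $B_2 = M_2 \cup (\text{2-handle})$, where the $2$-handle is a tubular neighborhood of a meridional disk for $K_2 = V_2$. The core observation is then this: since $B_1 \cap B_2 = \emptyset$ and $M_1 \subset B_1$, the ball $B_2$ is disjoint from $M_1$, hence $B_2 \subset \clos(S^3 \setminus M_1) = V_1$. Therefore the entire $2$-handle sitting inside $B_2$ lies in~$V_1$. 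Its core is a meridional disk for $V_2$, and I would take $D_2$ to be (a parallel copy of) this meridional disk; by construction $D_2 \subset B_2 \subset V_1$, which is exactly the conclusion of Claim~\ref{cl:2121}.

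The main point I would need to verify carefully is the containment $B_2 \subset V_1$, and in particular that the meridional disk furnished by Budney's proposition genuinely lies inside~$V_1$ rather than merely inside the ambient complement of~$M_1$. This follows from the disjointness $B_1 \cap B_2 = \emptyset$ together with $M_1 \subseteq B_1$: any point of $B_2$ avoids $B_1$ and hence avoids $M_1 = \clos(S^3 \setminus V_1)$, so it lies in $\inr(V_1)$; thus $B_2 \subset V_1$ and the $2$-handle inside $B_2$, together with its meridional-disk core, lies in~$V_1$. I would also confirm that the core of the $2$-handle, which is a meridional disk for $K_2 = V_2$, is genuinely a meridional disk \emph{for} $V_2$ in the sense needed downstream (a properly embedded disk in $V_2$ whose boundary is a meridian of $\partial V_2$); this is precisely the content of the ``Moreover'' assertion, since the $2$-handle is attached along $\partial V_2 = \partial M_2$.

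The step I expect to be the main obstacle is matching the roles in Proposition~\ref{prop:Budney} correctly and keeping the meridians straight: the proposition is stated so that the \emph{complements} $K_i = \clos(S^3 \setminus C_i)$ are the knotted solid tori, so one must feed in the $M_i$ as the $C_i$ and recover the $V_i$ as the $K_i$, and then read ``meridional disk for $K_i$'' as ``meridional disk for $V_i$''. Once this bookkeeping is set up, the disjointness of the two balls does all the geometric work, and no delicate isotopy argument is required here — the essential structural input was already established in Claims~\ref{cl:noV1subpsetV} and~\ref{cl:disj}.
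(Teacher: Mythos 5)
Your proposal is correct and follows exactly the paper's intended argument: the paper proves Claim~\ref{cl:2121} precisely by applying Proposition~\ref{prop:Budney} to the disjoint complements $M_1$ and $M_2$ (disjoint by Claim~\ref{cl:disj}), with the $M_i$ playing the role of the $C_i$ and the $V_i$ recovered as the knotted solid tori $K_i$. Your filling-in of the details --- the disjointness of the balls $B_1, B_2$ forcing $B_2 \subset V_1$, and taking $D_2$ to be the core of the $2$-handle from the ``Moreover'' clause --- is exactly the reasoning the paper leaves implicit.
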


Now, since $M_1$ and $M_2$ are disjoint, we have $M_2\subset V_1$.
Therefore, the image $\phi_1(M_2)$ is well defined.
We consider the complement $W:=S^3\setminus \phi_1 (\inr(M_2))$.
By Alexander's theorem on embedded tori in~$S^3$, we observe that $W$ is a knotted solid torus because we know that the boundary $\partial W= \partial \phi_1(M_2) = \phi_1(\partial M_2)=\phi_1(\partial V_2)$ is a torus while the complement $S^3\setminus W=\phi_1 (\inr(M_2))$ is homeomorphic to $\inr(M_2)$, which is the complement of the knotted solid torus~$V_2$. 
By the Gordon--Luecke theorem (see~\cite{GL89,GL89'}), the knot complement determines the knot. 
Therefore, $W$ is a tubular neighborhood of a closed curve representing~$K$.
We see that $W$ contains $\phi_1(L)$ by construction.
Finally, we see that the wrapping number of $\phi_1(L)$ in~$W$ is equal to the wrapping number of $L$ in~$V_2$ because there exists a meridional disk $D_2$ for $V_2$ such that $D_2 \subset V_1$ (see~Claim~\ref{cl:2121}), so that $\phi_1$ maps $D_2$ to a meridional disk of~$W$. 
Therefore, $\phi_1(L)$ is contained in a $K$-knotted solid torus~$W$ and the wrapping number of $\phi_1(L)$ in~$W$ is at least~$2$.
This means that $\phi_1(L)=L_1$ is a $K$-en\-tangle\-ment, which contradicts the assumption.
This contradiction completes the proof of assertion~(iii) of Proposition~\ref{lem:main} in Case~A.

\subsection{Proof in Case B}

Our proof for Case~B is in the standard framework of JSJ decomposition theory (see, e.\,g., \cite{Hat00, Bud06, Mat07} and references therein). 
Let $U_L$ be a closed tubular neighborhood of~$L$ in~$S^3$ such that $\dd V_1$ and $\dd V_2$ are both contained in $S^3\sm U_L$ and let~$C_L=\clos(S^3\sm U_L)$.
Then $C_L$ is prime because we assume that $L$ is non-split.
The JSJ-decomposition theorem implies that $C_L$ contains a (unique up to isotopy) collection of embedded, incompressible tori $T$ such that 
\begin{itemize}
\item if one removes an open tubular neighbourhood $U_T$ of $T$ from $C_L$ then the resulting manifold $C_L\sm U_T$ is a disjoint union of Seifert-fibered and atoroidal manifolds;
\item no proper subcollection of $T$ has the preceding property.
\end{itemize}
If $F$ is an incompressible surface in a prime $3$-manifold $M$ and $T$ is a collection of tori in~$M$ that gives the JSJ-decomposition, then there is an isotopy of $F$ in $M$ that moves $F$ to a position where $F \cap T=\emptyset$ (see, e.\,g., \cite[Sec.~6.4]{Mat07}). Since the tori $\partial V_1$ and $\partial V_2$ are both incompressible in~$C_L$ (Claim~\ref{cl:Vi-incompressible}), we can apply the Isotopy Extension Theorem (see above) and assume without loss of generality that neither $\partial V_1$ nor $\partial V_2$ intersects~$U_T$.
Then the initial assumption of Case~B implies 

\begin{claim}
\label{cl:Seifert}
In Case B, $\partial V_1$ and $\partial V_2$ are contained in the same connected component, say~$E$, of~$C_L\sm U_T$ and neither $\partial V_1$ nor $\partial V_2$ is boundary parallel in~$E$. Since $\partial V_1$ and $\partial V_2$ are incompressible, this implies that $E$ is a Seifert-fibered manifold.
\end{claim}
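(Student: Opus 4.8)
The plan is to establish the three assertions of the claim in turn---same component, no boundary parallelism, and Seifert fiberedness---using only the hypothesis of Case~B together with Claim~\ref{cl:Vi-incompressible}. The crucial point to keep in mind throughout is that every reduction we have made (pushing $\dd V_1$ and $\dd V_2$ off $U_T$, and the earlier normalizations) was realized as an \emph{ambient} isotopy of $S^3$ fixing $L$ pointwise. Hence the defining property of Case~B---that $\dd V_1$ cannot be isotoped in $S^3\sm L$ to be disjoint from $\dd V_2$---is preserved, and I may use it for the current configuration. First I would observe that this property forbids $\dd V_1$ and $\dd V_2$ from being disjoint at present (an empty intersection would be witnessed by the identity isotopy). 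Since distinct components of $C_L\sm U_T$ are disjoint, the two incompressible tori must lie in one and the same component, which I call $E$.

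The heart of the argument is ruling out boundary parallelism, and I would treat both tori uniformly by a single \emph{push-across} move. Suppose $\dd V_1$ were parallel in $E$ to a boundary component $t$ of $E$; then, using the product region, I can isotope $\dd V_1$ into a collar of $t$ and push it slightly past $t$, out of $E$. There are two kinds of $t$. If $t$ is a component of $\dd U_L$, the push carries $\dd V_1$ into $\inr(U_L)$, where it is automatically disjoint from $\dd V_2\subset S^3\sm U_L$. If $t$ is a copy of a JSJ torus lying on $\dd U_T$, the push carries $\dd V_1$ through $U_T$ into the JSJ piece $E'$ adjacent to $E$ across that torus; since every embedded torus in $S^3$ separates, $E'\neq E$, so the relocated $\dd V_1$ is again disjoint from $\dd V_2\subset E$. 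In either case I will have produced an isotopy of $\dd V_1$ in $S^3\sm L$ making it disjoint from $\dd V_2$, contradicting Case~B. The same reasoning applies to $\dd V_2$ by symmetry, so neither torus is boundary parallel in $E$.

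Finally, $\dd V_1$ is, by Claim~\ref{cl:Vi-incompressible}, an incompressible torus lying in the interior of $E$, and by the previous step it is not boundary parallel. By the JSJ decomposition each component of $C_L\sm U_T$ is either atoroidal or Seifert fibered, and an atoroidal piece contains no incompressible, non-boundary-parallel torus; therefore $E$ is Seifert fibered, which completes the claim. The step I expect to require the most care is the push-across move: I must check that the relocated $\dd V_1$ stays inside $S^3\sm L$ throughout (which holds because the whole maneuver takes place in $C_L$), and that pushing across a JSJ torus really lands in a \emph{different} JSJ piece, for which the separation property of tori in $S^3$---and hence in $C_L$---is exactly what is needed.
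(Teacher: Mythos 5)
Your proof is correct and takes essentially the same route as the paper, which (after the identical normalization pushing both tori off $U_T$) simply asserts the claim as a consequence of the Case~B hypothesis; your observation that the Case~B property survives the ambient normalizing isotopies, the disjointness/push-across argument, and the JSJ dichotomy supply exactly the details the paper leaves implicit. One cosmetic point: when the parallel boundary torus $t$ lies in $\partial U_L$, the pushed copy of $\partial V_1$ exits $C_L$, so the justification should be that it stays in $U_L\setminus L$ rather than in $C_L$ (or, more simply, stop the push inside a collar of $t$ in $E$, which the compact set $\partial V_2$ already misses).
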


The classification of Seifert-fibered manifolds arising as JSJ-chambers for link complements in~$S^3$ is well-known. See~\cite{Bud06}. In particular, the proof of \cite[Proposition~4]{Bud06} implies that one of the following holds:

\begin{itemize}
\item[\bf Case B$_1$.] $E$~fibers, with $2$ singular fibers, over the sphere with $n\ge 1$ holes. 

In this case, $E$ is the complement of a tubular neighborhood of $n$ regular fibers in a Seifert-fibering of~$S^3$. 

\item[\bf Case B$_2$.] $E$~fibers, with~$1$ singular fiber, over the sphere with $n\ge 2$ holes.

In this case, $E$ is the complement of a tubular neighborhood of $n-1$ regular fibers in a 
Seifert-fibering of an embedded solid torus in~$S^3$. 

\item[\bf Case B$_3$.] $E$~fibers, with no singular fibers, over the sphere with $n\ge3$ holes.

In this case, one of the following holds (the subcases are not mutually exclusive):

\begin{itemize}
\item[\bf Case B$_{3a}$.]
$E$ is the complement of a tubular neighborhood of $n-1$ fibers in a trivial 
Seifert-fibering of an embedded 
solid torus in~$S^3$. 

\item[\bf Case B$_{3b}$.] 
$E$ is the complement of a tubular neighborhood of $n-2$ regular fibers and the singular fiber in a Seifert-fibering of an embedded solid torus in~$S^3$. 

\item[\bf Case B$_{3c}$.] 
A component~$Y$ of~$\clos(S^3\sm E)$ is a solid torus such that the meridians of $Y$ are fibers of~$E$. 
\end{itemize}
\end{itemize}

It is well known that any incompressible torus in a connected compact irreducible Seifert-fibered manifold with nonempty boundary is isotopic to a surface that is a union of regular fibers (see, e.\,g., \cite[Proposition~1.11]{Hat00}). Once more, we can apply the Isotopy Extension Theorem (see above) and assume without loss of generality that both $\partial V_1$ and $\partial V_2$ are unions of regular fibers. Then each of $\partial V_1$ and $\partial V_2$ determines a simple closed curve in the base of~$E$.
Let~$\Delta$ denote the base of~$E$ and let $\gamma_1$ and $\gamma_2$ be the curves in~$\Delta$ representing $\partial V_1$ and $\partial V_2$, respectively. 
Before considering Cases B$_1$--B$_{3c}$ separately,
we list several obvious general properties of~$\gamma_1$ and~$\gamma_2$.

\begin{claim}
\label{cl:essential-base}
\begin{itemize}
\item Neither~$\gamma_1$ nor~$\gamma_2$ bounds in~$\Delta$ a disk with $\le 1$ singular point because neither~$\partial V_1$ nor~$\partial V_2$ is compressible.

\item Neither~$\gamma_1$ nor~$\gamma_2$ is isotopic (rel. singular points) to a boundary component of~$\Delta$ because neither~$\partial V_1$ nor~$\partial V_2$ is boundary parallel in~$E$.
\end{itemize}
\end{claim}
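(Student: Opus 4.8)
The plan is to use only the two ``easy'' halves of the standard dictionary between vertical tori in a Seifert-fibered space and their images in the base orbifold, and then pass to contrapositives; this sidesteps the delicate converse implications entirely. Write $p\colon E\to\Delta$ for the Seifert projection. By the reduction immediately preceding the claim, both $\partial V_1$ and $\partial V_2$ are unions of regular fibers, so each $\gamma_i$ is a simple closed curve in $\Delta$ disjoint from the singular points and $\partial V_i=p^{-1}(\gamma_i)$.

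For the first bullet I would argue by contradiction: suppose $\gamma_i$ bounds a disk $\delta\subset\Delta$ containing at most one singular point. Then $p^{-1}(\delta)$ is a fibered solid torus $V$ (trivially fibered if $\delta$ is nonsingular, and carrying a single exceptional fiber otherwise), with $\partial V=p^{-1}(\gamma_i)=\partial V_i$. A meridional disk of the solid torus $V$ is then a compressing disk for $\partial V_i$ lying in $V\subset E\subset C_L\subset S^3\sm L$, which contradicts Claim~\ref{cl:Vi-incompressible}. Hence no such disk $\delta$ exists.

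For the second bullet I would again argue by contradiction: suppose $\gamma_i$ is isotopic, relative to the singular points, to a boundary component $c$ of $\Delta$, so that $\gamma_i$ and $c$ cobound an annulus $A\subset\Delta$ containing no singular points. Then $p^{-1}(A)\cong T^2\times I$ is a product region joining $\partial V_i=p^{-1}(\gamma_i)$ to the boundary torus $p^{-1}(c)\subset\partial E$, so $\partial V_i$ is boundary parallel in $E$, contradicting Claim~\ref{cl:Seifert}.

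The two geometric inputs here — that the preimage of a disk with at most one singular point is a fibered solid torus, and that the preimage of a singular-point-free annulus is a product $T^2\times I$ — are part of the standard local structure theory of Seifert fiberings (cf.\ \cite[Prop.~1.11]{Hat00}), so I expect no serious obstacle. The one place demanding care is the hypothesis ``rel.\ singular points'' in the second bullet: it is precisely what forces the cobounding annulus $A$ to be free of singular points, so that $p^{-1}(A)$ is a genuine product rather than a once-punctured fibered piece. Omitting it would break the argument, so fixing the correct notion of base-curve isotopy is the only subtle point.
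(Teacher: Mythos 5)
Your proof is correct and takes essentially the same route as the paper, which states this claim as an obvious consequence of Claim~\ref{cl:Vi-incompressible} and Claim~\ref{cl:Seifert} without spelling out the details; your contrapositive arguments (meridional disk of the fibered solid torus $p^{-1}(\delta)$ as a compressing disk, and $p^{-1}(A)\cong T^2\times I$ as a parallelism region) are exactly the intended justification behind the paper's two ``because'' clauses.
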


We say that a simple closed curve $\gamma$ in~$\Delta$ is $L$-separating if~$L$ intersects both components of~$S^3\sm\tilde\gamma$, where $\tilde\gamma$ is the lift of~$\gamma$ to~$E\subset S^3$.

\begin{claim}
\label{cl:separator}
\begin{itemize}
\item Neither~$\gamma_1$ nor~$\gamma_2$ is $L$-separating.

\item If $\gamma$ is a non-$L$-separating simple closed curve in~$\inr(\Delta)$ and $C_1$, $C_2$ are two $L$-separating curves in~$\Delta\sm\gamma$, then $C_1$ and $C_2$ lie in the same connected component of~$\Delta\sm\gamma$.

\item If two distinct components $X$ and $Y$ of~$\clos(S^3\sm E)$ are solid tori, then the boundary components of~$\Delta$ representing $\dd X$ and $\dd Y$ are both $L$-separating.
\end{itemize}
\end{claim}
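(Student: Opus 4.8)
The plan is to read all three items through the single torus $\tilde\gamma\subset E$ obtained as the lift of a curve in the base~$\Delta$, together with the location of~$L$. The one fact I will use throughout is that $L$ is disjoint from~$E$: since $E$ is a chamber of the JSJ decomposition of $C_L=\clos(S^3\sm U_L)$, we have $E\subset C_L$ and hence $L\subset\inr(U_L)\subset\clos(S^3\sm E)$. In particular $L$ is disjoint from every lift~$\tilde\gamma$; as a torus in~$S^3$ each $\tilde\gamma$ separates, so it distributes the components of~$L$ between its two sides, and ``$\gamma$ is $L$-separating'' means exactly that both sides receive a part of~$L$. The first item is then immediate: the lift of~$\gamma_i$ is $\dd V_i$, which separates $S^3$ into $V_i$ and $M_i=\clos(S^3\sm V_i)$, and since $L\subset\inr(V_i)$ all of~$L$ lies on the $V_i$ side, so $\gamma_i$ is not $L$-separating.

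For the second item, if $\gamma$ does not separate~$\Delta$ there is nothing to prove, so assume $\gamma$ cuts $\Delta$ into $\Delta'$ and~$\Delta''$. Writing $p\colon E\to\Delta$ for the fibration, the torus $\tilde\gamma$ cuts $S^3$ into two pieces $N'\supset p^{-1}(\Delta')$ and $N''\supset p^{-1}(\Delta'')$ (each $p^{-1}(\Delta')$, $p^{-1}(\Delta'')$ is connected and has interior disjoint from~$\tilde\gamma$, hence lies on one side). Since $\gamma$ is non-$L$-separating, all of~$L$ lies in one of them, say $L\subset\inr(N')$, and we label $\Delta'$ accordingly. I claim that every $L$-separating curve $C\subset\Delta\sm\gamma$ lies in~$\Delta'$, which gives the assertion. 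Indeed, if $C\subset\Delta''$ then $\tilde C\subset p^{-1}(\Delta'')\subset\inr(N'')$, so the component $Q$ of $S^3\sm\tilde C$ not meeting~$\tilde\gamma$ satisfies $Q\subset\inr(N'')$ and therefore misses~$L$; but then $L$ lies entirely on the other side of~$\tilde C$, contradicting that $C$ is $L$-separating.

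For the third item, the core observation is that every solid-torus component $X$ of $\clos(S^3\sm E)$ contains a component of~$L$. If $\dd X$ is a component of $\dd C_L=\dd U_L$, then $X$ is the tube around a component of~$L$ and the claim is clear. Otherwise $\dd X$ is a JSJ torus of~$C_L$, hence incompressible in $S^3\sm L$ (cf.\ Claim~\ref{cl:V-incompressible}); were $X$ disjoint from~$L$, the meridian disk of the solid torus~$X$ would compress $\dd X$ in $S^3\sm L$, a contradiction. Now let $X$ and~$Y$ be distinct solid-torus components; they are disjoint and each contains a component of~$L$. Since $Y$ is disjoint from~$X$, it lies on the side of~$\dd X$ opposite to~$X$, so both sides of~$\dd X$ meet~$L$ and the boundary curve of~$\Delta$ carrying~$\dd X$ is $L$-separating; the argument for~$\dd Y$ is symmetric.

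None of these steps is deep, as the author's phrase ``obvious general properties'' anticipates. The only point that requires genuine care is the base-surface/$S^3$ dictionary in the second item, namely the verification that cutting $\Delta$ along~$\gamma$ is compatible with cutting $S^3$ along~$\tilde\gamma$, so that $\tilde C$ lands strictly inside the $L$-free side~$N''$. This rests on the elementary principle, invoked several times above, that any connected set disjoint from~$\tilde\gamma$ (such as $p^{-1}(\Delta'')$, or a component of $\clos(S^3\sm E)$) lies entirely on one side of~$\tilde\gamma$; keeping this bookkeeping straight, rather than any substantive difficulty, is the main thing to watch.
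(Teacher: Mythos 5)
Your proof is correct. The paper gives no argument for this claim at all --- it is presented as a list of ``obvious general properties'' --- and your verification (every torus in $S^3$ separates; a connected set disjoint from the lifted torus $\tilde\gamma$ lies on one side of it; incompressibility of the JSJ tori in $S^3\setminus L$ forces each solid-torus component of $\clos(S^3\sm E)$ to meet $L$) is precisely the intended justification, carried out with the right bookkeeping.
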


Now, we study Cases B$_1$--B$_{3c}$ separately.

\subsubsection{Proof in Case~B$_1$}
In this case, $\Delta$ is a sphere with $n\ge 1$ holes and $2$ singularity points. First, we observe that under our assumptions we have ${n\ge 2}$ because otherwise no simple closed curve in~$\Delta$ satisfies the requirements of Claim~\ref{cl:essential-base}.
Then, since in Case B$_1$ each component of~$\clos(S^3\sm E)$ is a solid tori, it follows by Claim~\ref{cl:separator} that all of the boundary components of~$\dd \Delta$ are $L$-separating and lie in the same component of~$\Delta\sm\gamma_i$. Therefore, another component of~$\Delta\sm\gamma_i$ should contain both of the singularity points. 
It then readily follows that there is an automorphism~$f$ of~$\Delta$ such that 
\begin{itemize}
\item $f$ is an identity on some neighborhoods of the singularity points;
\item $f$ is an identity on $\dd\Delta$;
\item $f(\gamma_1)=\gamma_2$.
\end{itemize}
The lift $\tilde f\colon E\to E$ of~$f$ yields an automorphism~$F$ of~$S^3$ such that 
\begin{itemize}
\item $F$ is an identity on $(S^3\sm E) \supset L$; 
\item $F(\dd V_1)=\dd V_2$.
\end{itemize}

Since $V_1$ and $V_2$ are nontrivially knotted, it follows that $F(V_1)=V_2$.
This means that the triples $(S^3,V_1,L)$ and $(S^3,V_2,L)$ are homeomorphic, which implies that $L_1=L_2$. This contradiction completes the proof in Case~B$_1$.

\subsubsection{Proof in Case~B$_2$}
This case is similar to that of~B$_1$.
In Case~B$_2$, $\Delta$ is a sphere with $n\ge 2$ holes and $1$ singularity point. First, we observe that under our assumptions we have $n\ge 3$ because otherwise no simple closed curve in~$\Delta$ satisfies the requirements of Claim~\ref{cl:essential-base}.
Then, since in Case B$_2$ at least~$n-1$ components of~$\clos(S^3\sm E)$ are solid tori, it follows by Claim~\ref{cl:separator} that the corresponding~$n-1$ components of~$\dd \Delta$ are $L$-separating and lie in the same component of~$\Delta\sm\gamma_i$. Therefore, another component of~$\Delta\sm\gamma_i$ should contain the singularity point and precisely~$1$ component~$C$ of~$\dd\Delta$. 
The remaining argument repeats that of Case~B$_1$ verbatim.

\subsubsection{Proof in Cases~B$_{3a}$ and B$_{3b}$}
In these cases, $\Delta$ is a sphere with $n\ge 3$ holes and no singularity points. We see that, in both these cases, at least~$n-1\ge 2$ components of~$\clos(S^3\sm E)$ are solid tori. Then Claim~\ref{cl:separator} implies that the corresponding~$n-1$ components of~$\dd \Delta$ are $L$-separating and lie in the same component of~$\Delta\sm\gamma_i$.
Therefore, each of~$\gamma_1$ and~$\gamma_2$ is isotopic to the non-$L$-separating component of~$\dd\Delta$.
This contradicts Claim~\ref{cl:essential-base}.

\subsubsection{Proof in Case~B$_{3c}$}
We will prove that in Case~B$_{3c}$, the only possibility for~$V_1$ and~$V_2$ is to represent a composite knot type, in contradiction to the assumption that~$K$ is prime. In fact, this can be easily seen from related geometric constructions presented in~\cite{Bud06}. However, copying and explaining these constructions here would take a lot of place, so we prefer less intuitive but shorter way of proof. 

In Case~B$_{3c}$, a component~$Y$ of~$\clos(S^3\sm E)$ is a solid torus such that the meridians of $Y$ are fibers of~$E$. 
We introduce the following notation. If $\gamma$ is a simple closed curve in~$\Delta$ and $\tilde\gamma$ is the lift of~$\gamma$ to~$E$, we denote by~$Y(\gamma)$ the closure of the connected component of~$S^3\sm \tilde\gamma$ containing $\inr(Y)$. We denote by $\gamma_Y$ be the component of~$\dd\Delta$ representing~$\dd Y$.

\begin{claim}
\label{cl:Ygamma}
If $\gamma$ is a simple closed curve in~$\Delta$, then $Y(\gamma)$ is a solid torus whose meridians are fibers of~$E$. 
\end{claim}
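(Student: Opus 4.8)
The plan is to isolate the one structural fact that makes $Y$ special — namely that a regular fiber of $E$ sitting on $\dd Y$ bounds a meridional disk in $Y$ — and to show that this single feature propagates to $Y(\gamma)$ for every simple closed curve $\gamma$, via a compression argument. Concretely, I would prove that a fiber lying on $\tilde\gamma$ bounds a disk on the $Y$-side of $\tilde\gamma$, deduce from this that $Y(\gamma)$ is a solid torus, and then read off that the compressing fiber is exactly its meridian.

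First I would record the local picture. Since we are in Case~B$_3$, the fibration of $E$ has no singular fibers and its base $\Delta$ is planar, so the lift $\tilde\gamma$ of a simple closed curve $\gamma\subset\Delta$ is a torus saturated by fibers, on which every fiber is an essential curve, and all regular fibers of $E$ are mutually isotopic in $E$. I may assume $\gamma\subset\inr(\Delta)$: if $\gamma$ is a boundary component of $\Delta$ then $Y(\gamma)$ is $Y$ itself (or a collar of it), and the statement is immediate. With $\gamma$ interior, $\tilde\gamma\subset\inr(E)$ is disjoint from $Y$, so $Y$ lies entirely in one complementary component of $\tilde\gamma$; by the definition of $Y(\gamma)$ this gives $Y\subset Y(\gamma)$, and the part $E_Y$ of $E$ lying over the component of $\Delta\sm\gamma$ that contains $\gamma_Y$ satisfies $E_Y\subset Y(\gamma)$ as well.

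Next I would manufacture a compressing disk for $\tilde\gamma$ on the $Y$-side. Choosing an embedded arc in $\Delta$ from $\gamma$ to $\gamma_Y$ inside the $\gamma_Y$-containing component of $\Delta\sm\gamma$, its lift is a vertical annulus contained in $E_Y\subset Y(\gamma)$ joining a fiber $f\subset\tilde\gamma$ to a fiber of $\dd Y$. Because the meridians of $Y$ are fibers, that fiber of $\dd Y$ bounds a meridional disk in $Y\subset Y(\gamma)$; capping the annulus with this disk produces a disk $D\subset Y(\gamma)$ with $\dd D=f$, and $f$ is essential on $\tilde\gamma$. Hence $\tilde\gamma$ is compressible on the $Y(\gamma)$ side. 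Surgering $\tilde\gamma$ along $D$ converts the torus into a $2$-sphere lying in $Y(\gamma)$, which bounds a ball by irreducibility of $S^3$; reversing the surgery exhibits $Y(\gamma)$ as that ball with a single $1$-handle attached, i.e.\ as a solid torus. (Equivalently, this is the standard consequence of Alexander's theorem, already invoked above, that the compressible side of a torus in $S^3$ is a solid torus.) Finally, since $f$ is a fiber, is essential on $\dd Y(\gamma)=\tilde\gamma$, and bounds $D$ inside the solid torus $Y(\gamma)$, it is necessarily a meridian of $Y(\gamma)$; thus the meridians of $Y(\gamma)$ are fibers of $E$.

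The step I expect to be the main obstacle is the bookkeeping that places the compressing disk on the correct side: one must check that the connecting annulus really lies in $Y(\gamma)$ (which forces the choice of arc to stay in the $\gamma_Y$-side component of $\Delta\sm\gamma$) and that $f$ remains essential on $\tilde\gamma$. Once the compression is verified to be on the $Y$-side, the identification of $Y(\gamma)$ as a solid torus and of its meridian with a fiber is routine and uniform in $\gamma$, so no separate subcase analysis of the position of $\gamma$ (bounding a disk, enclosing several holes, parallel to $\gamma_Y$, etc.) should be needed.
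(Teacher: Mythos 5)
Your proof is correct and follows essentially the same route as the paper: an arc in $\Delta$ from $\gamma$ to $\gamma_Y$ lifts to a vertical annulus, which is capped off by a meridional disk of $Y$ to produce a compressing disk for $Y(\gamma)$ whose boundary is a fiber, whence $Y(\gamma)$ is a solid torus with fiber meridians. The only cosmetic differences are in the essentiality check (you use that fibers are essential on the saturated torus, while the paper intersects a candidate sphere with a core curve of $Y$) and in concluding solid-torusness (you surger along the disk, while the paper invokes Alexander's theorem together with boundary-irreducibility of nontrivial knot complements).
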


Let $\alpha$ be a simple arc in~$\Delta$ with the endpoints at $\gamma$ and $\gamma_Y$ such that the only intersection points of~$\alpha$ and $\gamma\cup \gamma_Y$ are these endpoints.
Then $\alpha$ lifts to an annulus $\tilde\alpha$, in~$E$, and the endpoint~$\alpha\cap\gamma_Y$ lifts to a meridian of~$Y$.
Attaching an appropriate meridional disk of~$Y$ to~$\tilde\alpha$, 
we obtain a disk, say~$D$.
We observe that $D\cap \dd Y(\gamma)=\dd D$ and that $\dd D$ does not bounds a disc in~$\dd Y(\gamma)$ because if $\dd D$ bounds a disk~$D'$ in~$\dd Y(\gamma)$ then a core curve of~$Y$ would intersect the sphere $D\cup D'$ transversely in a single point, which is impossible.
Therefore, $D$ is a compressing disk for $Y(\gamma)$.
By Alexander's theorem on embedded tori, $Y(\gamma)$ is either a solid torus or a nontrivial knot complement. 
Since nontrivial knot complements are boundary irreducible (see, e.\,g., \cite[Proposition~3.10]{BZH13}), it follows that $Y(\gamma)$ is a solid torus whose meridians are fibers of~$E$. 

\begin{claim}
\label{cl:Ygamma12}
We have $Y(\gamma_1)=V_1$ and $Y(\gamma_2)=V_2$.
\end{claim}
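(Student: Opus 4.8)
The plan is to identify the torus $\tilde\gamma_i$ with $\partial V_i$ and then pin down, by a purely topological dichotomy, which side of it contains $Y$. Recall that in the present case we have arranged $\partial V_i$ to be a union of regular fibers of the Seifert fibration $E\to\Delta$, and $\gamma_i$ is by definition the image of $\partial V_i$ in the base~$\Delta$. Hence the total preimage of~$\gamma_i$ under the fibration projection is precisely~$\partial V_i$; that is, $\tilde\gamma_i=\partial V_i$.

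Next I would describe the two complementary regions of this torus. The torus $\partial V_i$ separates~$S^3$ into the solid torus~$V_i$ on one side and the knot exterior $M_i=\clos(S^3\sm V_i)$ on the other. Since $V_i$ is a tubular neighborhood of a representative of the \emph{nontrivial} knot~$K$, the piece $M_i$ is the exterior of a nontrivial knot and is therefore \emph{not} a solid torus (were $M_i$ a solid torus, the decomposition $S^3=V_i\cup M_i$ would be a genus-one Heegaard splitting exhibiting~$K$ as a core of a Heegaard handlebody, forcing~$K$ to be unknotted). Because $\partial V_i=\tilde\gamma_i$ lies in~$E$ while $\inr(Y)$ is disjoint from~$E$, the connected set $\inr(Y)$ lies entirely in one of the two components of $S^3\sm\tilde\gamma_i$, so by the definition of $Y(\gamma_i)$ we have $Y(\gamma_i)\in\{V_i,M_i\}$.

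Finally I would invoke Claim~\ref{cl:Ygamma} with $\gamma=\gamma_i$, which asserts that $Y(\gamma_i)$ is a solid torus. Since $M_i$ is not a solid torus, this leaves only the possibility $Y(\gamma_i)=V_i$; running the same argument with $i=2$ gives $Y(\gamma_2)=V_2$, completing the proof of the claim.

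I do not expect a genuine obstacle here. The two points that require a word of care are the identification $\tilde\gamma_i=\partial V_i$ (immediate once $\partial V_i$ is realized as a union of fibers) and the standard fact that the exterior of a nontrivial knot is not a solid torus. The real content of the statement has already been front-loaded into Claim~\ref{cl:Ygamma}; what remains is simply the observation that, among the two sides of~$\partial V_i$, only~$V_i$ is a solid torus, which forces the solid-torus region $Y(\gamma_i)$ to coincide with~$V_i$.
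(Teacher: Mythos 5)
Your proof is correct, but it follows a genuinely different route from the paper's. The paper argues through the link $L$: since $\partial Y$ is incompressible in $S^3\setminus L$ and $Y$ is a solid torus, $Y$ must intersect $L$ (otherwise a meridional disk of $Y$ would be a compressing disk for $\partial Y$ in the complement of $L$); since $L\subset \inr(V_i)$ while $Y$ is connected and disjoint from $\partial V_i$, this forces $Y\subset V_i$, and then $Y(\gamma_i)=V_i$ is immediate from the definition. You bypass $L$ entirely: you set up the dichotomy $Y(\gamma_i)\in\{V_i,M_i\}$ and eliminate the knot-exterior side $M_i$ by playing Claim~\ref{cl:Ygamma} (which makes $Y(\gamma_i)$ a solid torus) against the standard fact that the exterior of a nontrivial knot is not a solid torus; this is legitimate, since $K$ is nontrivial by hypothesis, Claim~\ref{cl:Ygamma} applies to $\gamma_i$ (a simple closed curve in $\inr(\Delta)$ disjoint from $\gamma_Y$), and that claim is established before and independently of the present one, so there is no circularity. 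The trade-off: the paper's argument produces the intermediate facts $Y\cap L\neq\emptyset$ and $Y\subset V_i$ (the latter reappears later, e.g.\ in the proof of Claim~\ref{cl:non-sep-in-gammai} as $Y(C)\supset V_1\supset Y$), whereas in your version $Y\subset V_i$ is still available but only a posteriori, as a consequence of $Y(\gamma_i)=V_i$. Conversely, your argument does not need the incompressibility of $\partial Y$ in $S^3\setminus L$ or any knowledge of where $L$ sits; it reduces the claim to pure solid-torus considerations, at the mild cost of invoking the nontriviality of $K$, which the paper's proof of this particular claim does not use.
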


Since $\dd Y$ is incompressible in~$S^3\sm L$, it follows that $Y$ intersects~$L$. Therefore, $V_i$ contains $Y$ because $V_i$ contains~$L$.
Then we have $Y(\gamma_i)=V_i$ by definition of $Y(\gamma_i)$.


\begin{claim}
\label{cl:non-sep-knotted}
If $C$ is a non-$L$-separating component of~$\dd\Delta$, then $Y(C)$ is nontrivially knotted.
\end{claim}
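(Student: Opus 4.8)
The plan is to argue by contradiction: I assume that $Y(C)$ is \emph{not} nontrivially knotted and derive a contradiction. Since $Y(C)$ is a solid torus by Claim~\ref{cl:Ygamma} (applied, if necessary, to a push-off of $C$ into $\inr(\Delta)$), the assumption means that its complementary solid torus $Z:=\clos(S^3\sm Y(C))$ is \emph{also} a solid torus, so that $\dd Y(C)=\tilde C$ is a Heegaard torus of~$S^3$. The proof then splits according to whether $C$ coincides with the distinguished boundary component $\gamma_Y$ (the one representing~$\dd Y$).

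First I would treat the generic case $C\ne\gamma_Y$. Here $\tilde C$ is a boundary torus of~$E$ distinct from $\dd Y=\tilde\gamma_Y$. Since $\inr(E)\cup\inr(Y)$ is connected and disjoint from $\tilde C$, the interior $\inr(Y)$ lies on the same side of $\tilde C$ as $\inr(E)$; hence $Y(C)$ is the side containing all of~$E$ (together with $Y$), and $Z$ is exactly the component of $\clos(S^3\sm E)$ adjacent to~$\tilde C$. Under the contradiction hypothesis $Z$ is a solid torus, and it is distinct from the solid torus~$Y$ because $\dd Z=\tilde C\ne\tilde\gamma_Y=\dd Y$. Thus $Z$ and $Y$ are two distinct solid-torus components of $\clos(S^3\sm E)$, so the third bullet of Claim~\ref{cl:separator} forces both $C$ (representing $\dd Z$) and $\gamma_Y$ (representing $\dd Y$) to be $L$-separating. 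This contradicts the hypothesis that $C$ is non-$L$-separating and settles the case $C\ne\gamma_Y$.

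The remaining case $C=\gamma_Y$, where $Y(C)=Y$, is the main obstacle, because $\clos(S^3\sm Y)$ is not a single chamber of $\clos(S^3\sm E)$ and Claim~\ref{cl:separator} no longer applies. Here I would instead compare $Y$ with $V_1=Y(\gamma_1)$. By the definition of $Y(\gamma_1)$ and the fact that $Y$ is disjoint from the essential torus $\tilde\gamma_1\subset\inr(E)$, we have $Y\subseteq V_1$; moreover, by Claim~\ref{cl:Ygamma} the meridians of both $Y$ and~$V_1$ are fibers of~$E$, and all fibers are isotopic within the product region $\Delta_1^+\times S^1\subseteq V_1$, where $\Delta_1^+$ is the side of $\gamma_1$ containing $\gamma_Y$. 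Consequently the core of~$Y$ has winding number~$1$ in~$V_1$. Assuming $Y$ unknotted, $\clos(S^3\sm Y)$ is a solid torus containing $\clos(S^3\sm V_1)$; since $V_1$ is knotted (its core represents the nontrivial knot~$K$, by Claim~\ref{cl:Ygamma12}), $\dd V_1$ is incompressible from the outside, while the winding-number-$1$ condition forbids any meridional compression of $\dd V_1$ into the collar $V_1\sm\inr(Y)$. Hence $\dd V_1$ is an incompressible torus in the solid torus $\clos(S^3\sm Y)$, so it is boundary parallel there; the parallelism region $V_1\sm\inr(Y)$ is then a product $T^2\times I$, whence $V_1$ is isotopic to~$Y$ and the two share the same core knot type. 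As this type is~$K\ne$ unknot, we contradict the assumption that $Y$ is unknotted, completing the case $C=\gamma_Y$.

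The delicate point, and the step I expect to require the most care, is this boundary-parallelism argument for $C=\gamma_Y$: one must verify that no meridian disk of~$V_1$ can be pushed off~$Y$ (which is precisely where the winding-number-$1$ computation enters) and then invoke the fact that an incompressible torus in a solid torus is boundary parallel. Everything else reduces to bookkeeping of the sides of the tori $\tilde C$, $\tilde\gamma_1$, and~$\tilde\gamma_Y$ together with the already-established Claims~\ref{cl:Ygamma}, \ref{cl:Ygamma12}, and~\ref{cl:separator}.
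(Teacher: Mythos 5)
Your proof is correct, but note that the paper needs no case split at all: its argument is uniform in~$C$. Every lift $\tilde C$ of a component of $\dd\Delta$ is incompressible in $S^3\sm L$ (it is parallel to a JSJ torus of~$C_L$ or is a component of~$\dd U_L$), so if $\tilde C$ were an unknotted (Heegaard) torus, each of the two complementary solid tori would have to meet~$L$ (otherwise its meridional disk would compress $\tilde C$ in $S^3\sm L$), and this is exactly the statement that $C$ is $L$-separating; the contrapositive is the claim, for every~$C$ including $C=\gamma_Y$. Your case $C\ne\gamma_Y$ is this same argument in disguise, since the third bullet of Claim~\ref{cl:separator} that you invoke is proved by precisely this incompressibility reasoning; routing through it is legitimate but not really a different road. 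Your case $C=\gamma_Y$, however, is genuinely different: you use the knotted solid torus $V_1\supseteq Y$ and the winding-number-one computation (which is the annulus construction from the proof of Claim~\ref{cl:Ygamma}: a meridional disk of~$Y$ extends by a vertical annulus to a meridional disk of~$V_1$, which the core of~$Y$ then meets exactly once) to conclude that an unknotted~$Y$ would make $\dd V_1$ incompressible in the solid torus $\clos(S^3\sm Y)$. This is correct, and it in fact proves more than required, since it never uses the hypothesis that $\gamma_Y$ is non-$L$-separating. What your route costs is length and one stylistic detour: once $\dd V_1$ is shown incompressible in a solid torus you are already done, because a solid torus contains no incompressible torus whatsoever ($\mathbb{Z}^2$ does not inject into $\mathbb{Z}$, so the loop theorem produces a compressing disk); the fact you cite, that an incompressible torus in a solid torus is boundary parallel, is only vacuously true, so the parallelism/product-region/isotopy steps, while logically valid, conceal where the contradiction actually lives. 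One small slip worth fixing: in the case $C\ne\gamma_Y$, the set $\inr(E)\cup\inr(Y)$ is not connected (the two interiors are disjoint open sets); you should instead say that $\inr(E)\cup\tilde\gamma_Y\cup\inr(Y)$ is connected and disjoint from~$\tilde C$, after which the conclusion you draw is right.
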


We observe that the lift $\tilde C'$ of a component~$C'$ of~$\dd\Delta$ is incompressible in~$S^3\sm L$. Therefore, if $\tilde C'$ is trivially knotted, both components of~$S^3\sm \tilde C'$ intersect~$L$ so that~$C'$ is $L$-separating.

Without loss of generality we can assume that $\gamma_1$ lies in~$\inr(\Delta)$.

\begin{claim}
\label{cl:non-sep-in-gammai}
If $C$ is a component of~$\dd\Delta$ such that $C$ and $\gamma_Y$ lie in distinct components of~$\Delta\sm\gamma_1$, then $C$ is non-$L$-separating.
\end{claim}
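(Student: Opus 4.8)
The plan is to prove the contrapositive picture directly: I will show that the entire link $L$ lies in a single component of $S^3\sm\tilde C$, which is exactly the statement that $C$ is non-$L$-separating. The geometric intuition is that $\gamma_1$ sits \emph{between} $C$ and $\gamma_Y$ in the base~$\Delta$, so after lifting through the Seifert fibration the torus $\partial V_1=\tilde\gamma_1$ screens $L$ off from $\tilde C$. The proof should therefore reduce to a connectedness argument once the separation picture is set up correctly.

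First I would assemble the two inputs I intend to lean on. Claim~\ref{cl:Ygamma12} identifies $V_1$ with $Y(\gamma_1)$, that is, $V_1$ is the closure of the component of $S^3\sm\tilde\gamma_1$ containing $\inr(Y)$; and the standing hypothesis of the proof gives $L\subset\inr(V_1)$. Together these say that $L$ and $\inr(Y)$ lie on the same side of $\partial V_1$. Next I would locate $\tilde C$ relative to $V_1$. Using the hypothesis of the claim, I would write the relevant components of $\Delta\sm\gamma_1$ as $\Delta_Y$ and $\Delta_C$, with $\gamma_Y\subset\Delta_Y$ and $C\subset\Delta_C$. Lifting through $p\colon E\to\Delta$, the part of $E$ contained in $V_1$ is $p^{-1}(\Delta_Y)$, the part adjacent to~$Y$, whereas $\tilde C=p^{-1}(C)\subset p^{-1}(\Delta_C)$. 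Since $C\neq\gamma_1$, the torus $\tilde C$ does not meet $\tilde\gamma_1$, and the solid pieces of $\clos(S^3\sm E)$ attached over the boundary of $\Delta_Y$ stay inside $V_1$; hence $\tilde C$ is disjoint from $V_1$, i.e.\ $\tilde C\subset\clos(S^3\sm V_1)$.

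I would then conclude immediately. As $V_1$ is connected and disjoint from $\tilde C$, it lies entirely in one component of $S^3\sm\tilde C$, and since $\inr(Y)\subset\inr(V_1)$ this component is $Y(C)$. Because $L\subset\inr(V_1)\subset Y(C)$, the link $L$ misses the opposite component of $S^3\sm\tilde C$ altogether, so by the definition of $L$-separating the component $C$ is non-$L$-separating, as required. The only genuine obstacle I anticipate is bookkeeping the side conventions: verifying that the $E$-part of $V_1$ really is $p^{-1}(\Delta_Y)$ rather than $p^{-1}(\Delta_C)$, and that every piece of $\clos(S^3\sm E)$ glued over a boundary component of $\Delta_Y$ remains inside $V_1$ and thus never meets $\tilde C$. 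Once the nesting $Y\subset V_1=Y(\gamma_1)$, with $\tilde\gamma_1$ separating $\tilde C$ from $L$, is pinned down, the rest follows purely from connectedness of $V_1$.
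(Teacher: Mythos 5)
Your proof is correct and is in substance the same as the paper's: both arguments lift the fact that $\gamma_1$ separates $C$ from $\gamma_Y$ in the base $\Delta$ to the statement that $\tilde C$ lies in the component of $S^3\setminus \partial V_1$ not containing $Y$, and then conclude from $L\subset \inr(V_1)=\inr(Y(\gamma_1))$ (Claim~\ref{cl:Ygamma12}) that $L\subset Y(C)$, hence $C$ is non-$L$-separating. The only difference is presentational: the paper organizes the separation bookkeeping through an auxiliary dual graph on the components of $S^3\setminus(\tilde C\cup \partial V_1\cup \partial Y)$, shows it is a linear tree, and reads off the nesting $Y(C)\supset V_1\supset Y$, whereas you identify the two sides of $\partial V_1$ explicitly via the fibration ($p^{-1}(\Delta_Y)$ plus the attached pieces of $\clos(S^3\setminus E)$ versus $p^{-1}(\Delta_C)$ plus its attached pieces), which amounts to the same connectedness argument.
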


We denote by $\widetilde{C}$ the lift of~$C$ in~$E$ and define a graph~$\Gamma$ as follows. 
\begin{itemize}
\item The vertices of~$\Gamma$ correspond to the connected components of the space~$S^3\sm(\widetilde{C}\cup \dd V_1 \cup \dd Y)$;

\item The vertices of~$\Gamma$ correspond to the elements of the set~$\mathcal{F}:=\{\widetilde{C},\dd V_1,\dd Y\}$; 

\item A~pair of vertices of~$\Gamma$ are connected by an edge if and only if the closures of the components corresponding to these vertices both contain the same torus of~$\mathcal{F}$.
\end{itemize}

Since~$S^3$ is connected, it follows that $\Gamma$ is connected.

Since each of $\{\widetilde{C}, \dd V_1, \dd Y\}$ splits~$S^3$, it follows that each edge of~$\Gamma$ is a bridge.

Therefore, $\Gamma$ is a tree.

Since we assume that $C$ and $\gamma_Y$ lie in distinct components of~$\Delta\sm\gamma_1$, it follows that $\widetilde{C}$ and $\dd Y$ lie in distinct components of~$S^3\sm \dd V_1$. This means that the $\dd V_1$-edge separates 
the edges corresponding to~$\widetilde{C}$ and $\dd Y$.

Therefore, $\Gamma$ is a linear tree. This implies in particular that 
$Y(C)\supset V_1\supset Y$.
Then $Y(C)$ contains~$L$ because $V_1$ contains~$L$.
This implies the claim.

\begin{claim}
\label{cl:sum}
If a $\theta$-graph with the edges $\alpha_1$, $\alpha_2$, and $\alpha_3$ is embedded in~$\inr(\Delta)$ such that $\gamma_Y$ and the interior of the edge $\alpha_2$ lie in distinct components of~$\Delta\sm (\alpha_1\cup\alpha_3)$,
then the knot type represented by $Y(\alpha_1\cup\alpha_3)$ is a connected sum of the knot types represented by $Y(\alpha_1\cup\alpha_2)$ and $Y(\alpha_2\cup\alpha_3)$.
\end{claim}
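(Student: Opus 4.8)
The plan is to pass to the complementary knot exteriors and to read off the connected sum from the vertical annulus over the middle edge $\alpha_2$. Write $\kappa(\gamma)$ for the core of the solid torus $Y(\gamma)$; this is the knot type that $Y(\gamma)$ represents, and its complement $Z(\gamma):=\clos(S^3\sm Y(\gamma))$ is exactly the exterior of $\kappa(\gamma)$, an honest knot exterior with torus boundary by Claim~\ref{cl:Ygamma}. Abbreviating $c_{ij}:=\alpha_i\cup\alpha_j$, it therefore suffices to show that $Z(c_{13})$ is the exterior of $\kappa(c_{12})\#\kappa(c_{23})$, and I will do this by exhibiting a meridional annulus that splits $Z(c_{13})$ into the two pieces $Z(c_{12})$ and $Z(c_{23})$.

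First I set up the picture in the base. Since $\Delta$ is planar, each $c_{ij}$ separates $\Delta$. Let $P$ be the component of $\Delta\sm c_{13}$ containing $\inr(\alpha_2)$; by hypothesis $\gamma_Y\notin P$. The arc $\alpha_2$ is properly embedded in $P$ and cuts it into two subsurfaces $P_{12}$ and $P_{23}$, bounded by $c_{12}$ and $c_{23}$ respectively. Because $\gamma_Y$ lies outside $P\supset P_{12},P_{23}$, the component of $\Delta\sm c_{ij}$ containing $\gamma_Y$ is the one complementary to $P_{ij}$; passing to $S^3$ through the Seifert fibration $E\to\Delta$ this identifies the non-$Y$ sides, so that $Z(c_{13})$ is the closure of the component of $S^3\sm\tilde c_{13}$ lying over $P$, while $Z(c_{12})$ and $Z(c_{23})$ are the parts lying over $P_{12}$ and $P_{23}$. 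This is the only place the separation hypothesis enters, and it is exactly what pins down the two summands.

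Next I lift $\alpha_2$. Its preimage $\tilde\alpha_2$ is an annulus properly embedded in $Z(c_{13})$, with boundary the two fibers over the vertices of the $\theta$-graph, which lie on $\dd Z(c_{13})=\tilde c_{13}$; it separates $Z(c_{13})$ into the over-$P_{12}$ and over-$P_{23}$ pieces, that is, into $Z(c_{12})$ and $Z(c_{23})$. The core and both boundary curves of $\tilde\alpha_2$ are regular fibers, and by the meridian-disk computation of Claim~\ref{cl:Ygamma}, applied to the simple closed curves $c_{12}$ and $c_{23}$, a fiber on $\tilde c_{ij}$ is precisely a meridian of the solid torus $Y(c_{ij})$, hence a meridian of $\kappa(c_{ij})$. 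Thus $\tilde\alpha_2$ is a meridional annulus in the boundary of each of the knot exteriors $Z(c_{12})$ and $Z(c_{23})$, and in $Z(c_{13})=Z(c_{12})\cup_{\tilde\alpha_2}Z(c_{23})$ it glues meridian to meridian. By the standard characterization of connected sums, a knot exterior cut by a meridional annulus into two knot exteriors is the exterior of the connected sum of their cores; applying this yields $\kappa(c_{13})=\kappa(c_{12})\#\kappa(c_{23})$, with the convention that a trivial summand may occur when the annulus is inessential. Geometrically, capping the two boundary fibers of $\tilde\alpha_2$ with meridian disks of the solid torus $Y(c_{13})$ produces the decomposing $2$-sphere meeting $\kappa(c_{13})$ transversely in two points.

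The step needing the most care is the identification of the gluing as a connected sum rather than some other annular (splicing or satellite) operation: concretely, that $\tilde\alpha_2$ is glued meridian-to-meridian and not meridian-to-longitude on the two sides. This rests entirely on the fiber-is-a-meridian fact from Claim~\ref{cl:Ygamma}, together with the observation that $Z(c_{12})$ and $Z(c_{23})$ are full knot exteriors (again because $Y(c_{12})$ and $Y(c_{23})$ are solid tori), so that no extra solid-torus collar intervenes and cutting returns the entire summand exteriors. A secondary point is to check that everything remains over $\inr(\Delta)$, so that the lifts are the expected vertical tori and annuli; this is immediate since the $\theta$-graph is embedded in $\inr(\Delta)$. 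The degenerate case in which $\tilde\alpha_2$ compresses, forcing $\kappa(c_{13})$ and hence both summands to be trivial, is consistent with the stated identity and needs only a remark.
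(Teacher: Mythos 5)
Your proof is correct, and it takes a recognizably different route from the paper's, though the underlying geometry coincides. The paper's proof stays at the level of knots rather than exteriors: it chooses a section $f\colon\theta\to S^3$ of the fibration over the $\theta$-graph, observes (via Claim~\ref{cl:Ygamma}) that each $f(\alpha_i\cup\alpha_j)$ is isotopic to a core of $Y(\alpha_i\cup\alpha_j)$ and hence represents the same knot type, extends $\alpha_2$ to an arc $a_2$ with endpoints on $\gamma_Y$, and caps the vertical annulus over $a_2$ with two meridional disks of~$Y$ to obtain a $2$-sphere that contains $f(\alpha_2)$ and separates $f(\alpha_1)$ from $f(\alpha_3)$; the connected sum is then read off directly from the definition. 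You instead never choose a section: your decomposition object is the vertical annulus over $\alpha_2$ alone, viewed as a meridian-to-meridian annulus splitting $Z(c_{13})$ into $Z(c_{12})$ and $Z(c_{23})$ (in your notation), and you conclude by the standard composing-space fact that a knot exterior cut along such an annulus into two knot exteriors is the exterior of the connected sum of the complementary cores. It is worth noting that your decomposing sphere and the paper's are actually isotopic: by the proof of Claim~\ref{cl:Ygamma}, a meridional disk of $Y(c_{13})$ is precisely a vertical annulus over an arc running to $\gamma_Y$ capped with a meridional disk of~$Y$, so capping the lift of $\alpha_2$ inside $Y(c_{13})$ reproduces the paper's sphere over $a_2$. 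As for what each approach buys: the paper's argument uses nothing beyond the definition of connected sum, at the cost of producing explicit curve representatives via the section; yours avoids that choice but outsources work to a standard (if nontrivial) fact about meridional annuli in knot exteriors, and it genuinely needs the meridian bookkeeping you supply on all three boundary tori --- without it, passing from ``same exterior'' back to ``same knot'' would require an appeal to Gordon--Luecke rather than simple meridional filling. Your handling of the degenerate compressible case is also consistent with the claim, which asserts only ``a connected sum'' and is applied in the paper only after the summands are separately shown to be nontrivial.
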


We observe that any section of the lift of~$\theta$ gives an embedding $f\colon\theta\to S^3$ such that for any $i\neq j\in\{1,2,3\}$ the curve $f(\alpha_i\cup\alpha_j)$ is isotopic in $Y(\alpha_i\cup\alpha_j)$ to a core curve of $Y(\alpha_i\cup\alpha_j)$. This implies that $f(\alpha_i\cup\alpha_j)$ represents the same knot type as $Y(\alpha_i\cup\alpha_j)$ does. 
Now, let $a_2$ be a simple arc in~$\Delta$ with endpoints at~$\gamma_Y$ such that $a_2\cap\theta=\alpha_2$.
Since the lifts of the endpoints of~$a_2$ are meridians of~$Y$, 
it follows that there exist a pair of disjoint meridional disks in~$Y$ completing the lift of~$a_2$ to a sphere~$S^2$ in~$S^3$.
We see that $S^2$ contains $f(\alpha_2)$ and separates $f(\alpha_1)$ and $f(\alpha_3)$. This clearly implies the claim.

Now, we deduce from the above claims that $K$ is composite.
Let $\Delta_1$ be the connected component of~$\Delta\sm\gamma_1$ that does not contain~$\gamma_Y$. 
Claim~\ref{cl:essential-base} implies that $\Delta_1$ contains at least two components of~$\dd\Delta$.
Let $C_1$, \dots, $C_k$, $k\ge 2$, denote these components and let $K_1$, \dots, $K_k$ denote the knot types represented by the solid tori $Y(C_1)$, \dots, $Y(C_k)$, respectively. 
Claim~\ref{cl:non-sep-in-gammai} implies that each of $C_1$, \dots, $C_k$ is non-$L$-separating.
Then Claim~\ref{cl:non-sep-knotted} implies that each of $Y(C_1)$, \dots, $Y(C_k)$ is nontrivially knotted.
We observe that Claims~\ref{cl:Ygamma12} and~\ref{cl:sum} imply that the knot type~$K$ of~$Y(\gamma_1)=V_1$ is a connected sum of~$K_1$, \dots, $K_k$.
Since $k\ge 2$ and each of~$K_1$, \dots, $K_k$ is nontrivial, it follows that $K$ is composite.


\section{Proof of Assertion (iv) of Proposition~\ref{lem:main}}

\subsection{The case of multicomponent links}

We show that if $L$ is a prime non-split link with at least two components, then there exists a reliable $K$-en\-tangle\-ment $L'$ for~$L$ with $\crn(L')\le \crn(L)+6\crn(K)$. 
(Then assertions (i) and (ii) will imply that this $K$-en\-tangle\-ment is non-split and prime.)

Let $D_L$ be a minimal diagram of~$L$ in the unit sphere 
\begin{equation}
\label{eq:sphere}
S^2=\{(x,y,z)\in\R^3\colon \sqrt{x^2+y^2+z^2}=1\}.
\end{equation}
Since $L$ has at least two components, it follows that there exists a disk $\delta\subset S^2$ such that the boundary $\dd\delta$ intersects $D_L$ transversely in four points and the intersection $\delta\cap D_L$ consists of two simple disjoint arcs corresponding to distinct components of~$L$.
Applying a homeotopy of~$S^2$, we can assume without loss of generality that 
\begin{equation}
\label{eq:Splus}
\delta=S^2_+:=\{(x,y,z)\in S^2\colon x\ge 0\}.
\end{equation}
Furthermore, we can carry out an additional homeotopy of~$S^2$ (preserving~$\delta$ setwise) that moves $D_L$ to a position where $D_L$ is contained in the annulus
\begin{equation}
\label{eq:A}
A:=\{(x,y,z)\in S^2\colon |z|\le 1/100\} 
\end{equation}
and the intersection of $D_L$ with the semiannulus
\begin{equation}
\label{eq:Aplus}
A_+:=\{(x,y,z)\in A\colon x\ge 0\} 
\end{equation}
is the pair of semicircles
\begin{equation}
\label{eq:ddAplus}
(\dd A)_+:=\{(x,y,z)\in \dd A\colon x\ge 0\}.
\end{equation}

Now, let $D_K$ be a smooth minimal diagram of~$K$ in the plane~$\R^2\subset \R^3$.
If $D_K$ has nonzero writhe, we transform $D_K$ into a diagram $D'_K$ with zero writhe, by a series of type~I Reidemeister moves.
Since the (absolute value of the) writhe of a knot diagram does not exceed the number of crossings of the diagram, we can assume that 
$$
\crn(D'_K)\le 2\crn(D_K)=\crn(K).
$$

Next, let~$A_K$ be a knotted smooth embedded annulus in~$\R^3$ such that 
\begin{itemize}
\item[--] each component of~$\dd A_K$ represents~$K$;

\item[--] the projection~$\R^3\to\R^2$ restricts to a smooth map~$\pi\colon A_K\to\R^2$; 

\item[--] a component of~$\dd A_K$ smoothly projects to~$D_K$;

\item[--] the set of multiple points of~$\pi\colon A_K\to\R^2$ consists of $\crn(D'_K)$ domains; each of these domains is a geometric lozenge in~$\R^2$ containing a crossing of~$D'_K$; all multiple points are double points.
\end{itemize}

Let $f\colon A\to A_K$ be a homeomorphism.
We observe that since $D'_K$ has zero wreathe, it follows by construction that the components of~$\dd A_K$ has zero linking number. 
Therefore, the link~$L'$ obtained from the diagram~$f(D_L)$ via natural pushing out into a neighborhood of~$A_K$ is a (reliable) $K$-en\-tangle\-ment for~$L$.

Applying a homeotopy of~$A_K$, we can move $f(A_-)$, where 
$$
A_-:=\{(x,y,z)\in A\colon x\le 0\},
$$
to a position where $\pi\colon A_K\to\R^2$ is injective on~$f(A_-)$.
We observe that $\pi(f(D_L))$ yields a diagram, say $D_{L'}$, of~$L'$.
Since $\pi$ is injective on~$f(A_-)$
and the intersection of $D_L$ with $A_+$ is $(\dd A)_+$,
it follows that the number of crossings in $D_{L'}$ is
$$
\crn(D_L)+4\crn(D'_K)\le \crn(L)+8\crn(K).
$$
Furthermore, every four crossings of~$D_{L'}$ corresponding to a crossing of~$D'_K$ that appeared as the result of type~I Reidemeister move needed to zero the writhe, obviously can be reduced to a pair of crossings. This implies that we have 
$$
\crn(L')\le \crn(L)+6\crn(K).
$$
Thus, $L'$ is a reliable (hence non-split and prime by assertions (i) and (ii)) $K$-en\-tangle\-ment for~$L$ with $\crn(L')\le \crn(L)+6\crn(K)$, as required.

\subsection{The case of knots}

We show that if $K$ is nontrivial knot and $P$ is a $\frac23$-regular prime knot, then there exists a \underline{prime} $K$-en\-tangle\-ment $P'$ for~$P$ with $$\crn(P')\le \crn(P)+6\crn(K).$$ 
The proof is similar to that in the case of multicomponent links.
Let $D_P$ be a minimal diagram of~$P$ in the unit sphere~\eqref{eq:sphere}. 
Since~$P$ is $\frac23$-regular, it follows from
\cite[Proposition~8.1 and Definition~7.1]{Mal18}
that
there exists a $2$-disk $\delta\subset S^2$ such that:
\begin{itemize}
\item the boundary $\partial \delta$ intersects $D_P$ transversely in four points;
\item the intersection $\delta\cap D_P$ consists of two simple disjoint arcs; 
\item the $2$-string tangle (say, $(B,t)$) represented by the diagram $d \cap D_P$, where $d$ stands for the complementary disk $S^2\setminus \inr(\delta)$, is locally trivial.
\end{itemize}
Applying a homeotopy of~$S^2$, we can assume without loss of generality that 
$\delta=S^2_+$ (see~\eqref{eq:Splus}). 
Furthermore, following the above proof in the case of multicomponent links,
we can carry out an additional homeotopy of~$S^2$ (preserving~$\delta$ setwise) that moves $D_P$ to a position where $D_P$ is contained in the annulus~$A$ (see~\eqref{eq:A}) and the intersection of $D_P$ with the semiannulus
$A_+$ (see~\eqref{eq:Aplus})
is the pair of semicircles
$(\dd A)_+$ (see~\eqref{eq:ddAplus}).
Then we repeat the proof of the case of multicomponent links verbatim (except replacing $L$ with $P$ and $L'$ with $P'$) and construct a knot~$P'$ with $\crn(P')\le \crn(P)+6\crn(K)$.
The difference is that $P$ has a single component so that~$P'$ is not a reliable $K$-en\-tangle\-ment. 
In particular, assertion~(ii) is not applicable.
Thus, to complete the proof it remains to show that 
\begin{enumerate}
\item $P'$ is a $K$-en\-tangle\-ment for~$P$;
\item $P'$ is prime.
\end{enumerate}

In order to show that $P'$ is a $K$-en\-tangle\-ment for~$P$, it is enough to check that if $V_A$ is a solid torus obtained as a tubular neighborhood of~$A$ in~$\R^3$, then a representative of $P$ obtained via natural pushing out of~$D_P$ into~$V_A$ has wrapping number~$2$ in~$V_A$. This readily follows by Lemma~\ref{lem:wrapping} because the condition that the $2$-string tangle $(B,t)$ represented by the diagram $d \cap D_P$ is locally trivial implies that $d \cap D_P$ is connected (the option where $d \cap D_P$ is locally trivial being a pair of disjoint simple arcs is excluded by the condition that $P$ is a nontrivial knot).

The primeness of $P'$ follows from results obtained in~\cite{Mal18}. 
We observe that, by construction, $P'$ is a sum of a $K$-knotted cable tangle  and the tangle $(B, t)$, which is either prime or trivial (being locally trivial). Each nontrivial cable tangle is prime (see \cite[Lemma~6.2]{Mal18}). If $(B,t)$ is prime, then $P'$ is prime by \cite[Theorem~1]{Lick81}. If $(B,t)$ is trivial, then $P'$ is prime by \cite[Lemma~6.3]{Mal18} (Lemma~6.3 of \cite{Mal18} implies that $P'$ is either prime or trivial if $(B, t)$ is trivial; however, $P'$ is nontrivial being a satellite knot).

\subsection{An auxiliary lemma on wrapping numbers}

\begin{definition}[Wrapping numbers of curves]
We say that a piecewise smooth curve $\gamma\colon S^1\to F$ in a surface~$F$ is \emph{regular} if $\gamma$ has only finitely many multiple points, all these multiple points are double points, and the intersections in all these multiple points are transverse. By the \emph{wrapping number} of a regular curve~$\gamma=\gamma(S^1)$ in an annulus~$A$ we mean the minimum number of intersections of~$\gamma$ with any arc in~$A$ with endpoints at distinct components of~$\dd A$ that contains no multiple points of~$\gamma$.
\end{definition}

Let $A$ be an annulus and let $N\colon S^1\to V$ be a knot in the solid torus~$V=A\times[0,1]$. 
We denote by $p$ the natural projection 
$$
p\colon A\times[0,1]\to A, \quad (x,t)\mapsto x.
$$  
We say that a knot $N\colon S^1\to V$ is in a \emph{regular} position (with respect to~$p$) if $p\circ N$ is a regular curve.
Two knots $N_1$ and $N_2$ in~$V$ are \emph{equivalent} if there exists an ambient isotopy of~$V$ moving~$N_1$ to~$N_2$.
We say that a knot $N\colon S^1\to V$ is in a \emph{minimal} position (with respect to~$p$) if $N$ is in a regular position and the number of double points in~$p(N)$ is minimal over all knots in regular positions equivalent to~$N$.
Let $N\colon S^1\to V$ be a knot in a regular position.
Obviously, the wrapping number of~$p(N)$ in~$A$ is greater or equal to the wrapping number of~$N$ in~$V$.

\begin{lem}
\label{lem:wrapping}
If the wrapping number of~$p(N)$ in~$A$ equals~$2$ and $N$ is in a minimal position, then the wrapping number of~$N$ in~$V=A\times[0,1]$ equals~$2$ as well.
\end{lem}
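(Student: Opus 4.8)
The plan is to establish the two-sided bound $\mathrm{wr}(N,V)=2$ by pairing the easy inequality recorded just before the lemma with a matching lower bound extracted from the minimal-position hypothesis. Since a product disk $\alpha\times[0,1]$, with $\alpha$ a cross-arc avoiding the double points of $p(N)$, is a meridional disk of $V$ meeting $N$ in exactly $|p(N)\cap\alpha|$ points, we already have $\mathrm{wr}(N,V)\le \mathrm{wr}(p(N),A)=2$. So it remains to prove $\mathrm{wr}(N,V)\ge 2$, i.e.\ to exclude the values $0$ and $1$. First I would record the algebraic data. The projection $p\colon V=A\times[0,1]\to A$ is a homotopy equivalence inducing an isomorphism on $H_1$, so the winding number $w$ of $N$ in $V$ equals the winding number of $p(N)$ in $A$. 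For any meridional disk $D$ the geometric count $|N\cap D|$ is congruent mod $2$ to the algebraic intersection number $w$, and likewise $|p(N)\cap\alpha|\equiv w\pmod 2$ for every cross-arc $\alpha$. As the hypothesis $\mathrm{wr}(p(N),A)=2$ is realized by an arc meeting $p(N)$ in two points, $w$ is even.

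From $w$ even and the congruence $|N\cap D|\equiv w\pmod 2$ applied to the disk realizing $\mathrm{wr}(N,V)$, I conclude that $\mathrm{wr}(N,V)$ is even; together with $\mathrm{wr}(N,V)\le 2$ this rules out the value $1$ outright and leaves only the alternative $\mathrm{wr}(N,V)=0$. Thus the entire problem reduces to excluding $\mathrm{wr}(N,V)=0$, and this is precisely the step at which the minimal-position hypothesis must enter (note that no minimality was needed so far).

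To exclude $\mathrm{wr}(N,V)=0$, suppose it held. Then $N$ is isotopic in $V$ into a $3$-ball, hence into a product ball $\delta\times[0,1]$ for some disk $\delta\subset A$, where it admits a diagram realizing the crossing number $c$ of its knot type. Consequently the minimal number of double points of the annular projection over the whole isotopy class of $N$ in $V$ equals $c$, and since $N$ is in minimal position, $p(N)$ has \emph{exactly} $c$ double points. On the other hand $\mathrm{wr}(N,V)=0$ forces $w=0$, so $p(N)$ lifts to a single closed curve $\widetilde\gamma$ in the universal-cover strip $\widetilde A=\R\times[0,1]$, and $\widetilde\gamma$ is itself a diagram of the knot type of $N$; hence $\widetilde\gamma$ has at least $c$ self-double-points. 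Now the double points of $p(N)$ fall into two disjoint kinds: the self-double-points of $\widetilde\gamma$, and the intersections of $\widetilde\gamma$ with its nontrivial integer translates $\widetilde\gamma+n$. Since $\mathrm{wr}(p(N),A)=2\ge 1$, the image of $\widetilde\gamma$ under projection to the $\R$-factor must have length at least $1$; two congruent translates whose horizontal extents overlap are forced to intersect, so at least one translate intersection occurs. Therefore $p(N)$ has at least $c+1$ double points, contradicting minimality. This rules out $\mathrm{wr}(N,V)=0$ and finishes the argument.

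I expect the genuinely delicate point to be the last one: the assertion that wrapping number at least $1$ compels the lift $\widetilde\gamma$ to meet a nontrivial translate, i.e.\ that the ``extra'' wrapping of a winding-$0$ curve must be paid for by additional double points of the projection. This is exactly where minimality does its real work. I would prove it by the universal-cover bookkeeping sketched above, relating horizontal extent, translate intersections, and the total double-point count, taking care (a)~that the single lift $\widetilde\gamma$ really is a diagram of the same knot type, so that its self-crossings are bounded below by $c$, and (b)~that the translate intersections are not inadvertently double-counted against the self-crossings when tallying the double points of $p(N)$.
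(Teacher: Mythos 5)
Your overall skeleton---the trivial inequality $\mathrm{wr}(N,V)\le 2$, the parity argument excluding the value $1$, the reduction to excluding $\mathrm{wr}(N,V)=0$, and then a universal-cover count comparing the double points of $p(N)$ with the self-crossings of the closed lift $\widetilde\gamma$---parallels the paper's proof quite closely: the paper also passes to the universal cover, identifies the lift as a diagram of the same knot type via the homeomorphism of $(B,N)$ with its lifted copy (this uses the ball $B\supset N$ coming from $\mathrm{wr}(N,V)=0$, so your point (a) is fine), and plays the minimality of $p(N)$ against a crossing count. Your decomposition of the double points of $p(N)$ into self-crossings of $\widetilde\gamma$ plus translate intersections is also correct.

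The genuine gap is exactly at the step you flagged as delicate: the assertion that ``two congruent translates whose horizontal extents overlap are forced to intersect'' is false, and no bookkeeping with horizontal extents can yield the translate intersection you need. Counterexample in the strip $\R\times[0,1]$: let $\widetilde\gamma$ be the boundary of a thin neighborhood (thickness $\varepsilon=0.01$, say) of the straight segment from $(0,1/4)$ to $(2,3/4)$. Then $\widetilde\gamma+1$ is the analogous curve around the segment from $(1,1/4)$ to $(3,3/4)$; the two core segments are parallel at perpendicular distance $1/\sqrt{17}>2\varepsilon$, so $\widetilde\gamma$ and $\widetilde\gamma+1$ are disjoint, although their horizontal extents $[-\varepsilon,2+\varepsilon]$ and $[1-\varepsilon,3+\varepsilon]$ overlap. (Consistently with the lemma, the projected curve in the annulus in this example has wrapping number $0$: a cross-arc can escape through the diagonal gaps between the shingled translates.) The statement you actually need---that if all translates $\widetilde\gamma+k$ are pairwise disjoint then $\mathrm{wr}(p(N),A)=0$---is true, but it is a separation statement of plane topology, not a metric one, and proving it is the real content of this step; it requires something like filling in the disjoint translates and applying a Janiszewski-type argument to see that they cannot jointly separate the bottom of the strip from the top. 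The paper routes around this differently: it takes an arc $a$ realizing $\mathrm{wr}(p(N),A)=2$, cuts $N$ along the meridional disk $a\times[0,1]$ into arcs $N_1,N_2$, observes that their lifts lie on opposite sides of a lifted disk and hence have disjoint projections, concludes from the crossing count (minimality versus the injection of lifted crossings) that $p(N_1)$ and $p(N_2)$ have no mutual crossings, and then a connectedness argument in the annulus cut open along $a$---two disjoint connected sets, each attached to one copy of $a$, cannot separate the two boundary arcs of the resulting rectangle---produces a cross-arc disjoint from $p(N)$, contradicting $\mathrm{wr}(p(N),A)=2$. Some argument of this separation type must replace your horizontal-extent step; as written, your proof is not complete.
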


\begin{proof}
Suppose to the contrary that the wrapping number of~$N$ in~$V$ is less than~$2$. 
The standard parity argument implies that the wrapping number of~$N$ in~$V$ is even. Thus, this number is zero. 
Then $V$ contains a ball~$B$ containing~$N$. 
Let $\widetilde{V}$ and $\widetilde{A}$ be the universal covering spaces of~$V$ and~$A$, respectively, and let $\tilde{p}\colon \widetilde{V}\to \widetilde{A}$ be a lift of~$p$.
Let $\widetilde{B}'$ be a component of the lift of~$B$ in~$\widetilde{V}$, and let $\widetilde{N}'$ be the component of the lift of~$N$ contained in~$\widetilde{B}'$. 

Since the wrapping number of~$p(N)$ in~$A$ equals~$2$, it follows that $A$ contains a simple arc~$a$ with endpoints at distinct components of~$\dd A$ such that $a$ intersect~$p(N)$ at precisely two points, these points are regular points of~$p(N)$ and the intersections are transverse. 
Let $\alpha$ denote the disk~$a\times[0,1]$, 
let $N_1$ and $N_2$ be the components of~$N\sm\alpha$, 
let $\tilde{\alpha}'$ be the component of the lift of~$N$ intersecting~$\widetilde{N}'_1$, and let $\widetilde{N}'_1$ and $\widetilde{N}'_2$ be the lifts of $N_1$ and $N_2$, respectively, contained in $\widetilde{N}'$.

We observe that $\tilde{\alpha}'$ splits $\widetilde{V}$ in two components and that $\widetilde{N}'_1$ and $\widetilde{N}'_2$ lie in distinct components of~$\widetilde{V}\sm \tilde{\alpha}'$. Consequently, the projections $\tilde{p}(\widetilde{N}'_1)$ and $\tilde{p}(\widetilde{N}'_2)$ are disjoint.

At the other hand, we easily see that the curve $\tilde{p}(\widetilde{N}')$ provides a diagram for the knot~$\widetilde{N}'$ in~$\inr(\widetilde{B}')$.
Since the pairs $(\widetilde{B}',\widetilde{N}')$ and $(B,N)$ are homeomorphic, it follows that, for each diagram~$D$ for $\widetilde{N}'$ in~$\inr(\widetilde{B}')$ there exists a homeotopy of $B$ that moves $N$ to a position $N_*$ such that the curve $p(N_*)$ is combinatorially equivalent to~$D$. This means in particular that the number of crossings of $\tilde{p}(\widetilde{N}')$ is not less than that of~$p(N)$ because the former supposed to be minimal possible for the ambient isotopy type of~$N$ in~$V$.

The number of crossings in~$p(N)$ is the sum of the numbers of crossings in $p(N_1)$, in $p(N_2)$, and between $p(N_1)$ and $p(N_2)$.
The number of crossings in~$\tilde{p}(\widetilde{N}')$ is the sum of the numbers of crossings in $\tilde{p}(\widetilde{N}'_1)$, in $\tilde{p}(\widetilde{N}'_2)$, and between $\tilde{p}(\widetilde{N}'_1)$ and $\tilde{p}(\widetilde{N}'_2)$. Since the number of crossings in~$\tilde{p}(\widetilde{N}'_i)$, $i=1,2$, is not less than that in $p(N_i)$, it follows that $p(N_1)$ and $p(N_2)$ are disjoint.
However, in this case there exists a simple arc with endpoints at distinct components of~$\dd A$ that does not intersect~$p(N)$.
This implies that the wrapping number of~$p(N)$ in~$A$ is~$0$.
The obtained contradiction proves the lemma. 
\end{proof}

\section{Proof of Proposition~\ref{pr:NxRegular}}

We prove that the proportion of non-$\frac34$-regular prime knots among all of the prime non-split links of $n$ or fewer crossings converges to $0$ as $n$ approaches infinity. Our proof uses the concept of diagrammatically prime knots and links.

\begin{definition}[Diagrammatically prime knots and links]
A link diagram~$D$ in~$\R^2$ is called \emph{prime} if $D$ is connected and if any circle in~$\R^2$ intersecting~$D$ transversely in two points cuts a simple subarc from~$D$.
We say that a link~$L$ is \emph{weakly diagrammatically prime} if a minimal plane diagram representing~$L$ is prime.
We say that $L$ is \emph{strongly diagrammatically prime} if all of the minimal plane diagrams of~$L$ are prime.
\end{definition}

\begin{remark}
Obviously, each prime non-split link is strongly diagrammatically prime.
The conjecture that the crossing number of knots is additive with respect to connected sum is equivalent to the conjecture that no composite knot is strongly diagrammatically prime.
\end{remark}

\begin{lem}
\label{lem:NRPKSDPK-1}
If there exists a non-$x$-regular prime knot~$P$ with ${0<x\le 1}$
then there exists a strongly diagrammatically prime composite knot~$K$ with $\crn(K)< x\cdot\crn(P)$ such that $P$ is a factor of~$K$.
\end{lem}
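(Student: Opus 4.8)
The plan is to select, among all knots that have $P$ as a factor and crossing number below $x\cdot\crn(P)$, one of \emph{minimal} crossing number, and then to show that this minimality forces every minimal diagram of it to be prime. Concretely, since $P$ is non-$x$-regular, by definition there is a knot $K_0$ with $P$ a factor of $K_0$ and $\crn(K_0)<x\cdot\crn(P)$. I would consider the set
\[
\mathcal{K} := \{\, K : P \text{ is a factor of } K \text{ and } \crn(K) < x\cdot\crn(P) \,\},
\]
which is nonempty (it contains $K_0$); as crossing numbers are nonnegative integers, I may pick $K\in\mathcal{K}$ with $\crn(K)$ minimal. Because $x\le 1$ we get $\crn(K)<x\cdot\crn(P)\le\crn(P)$, so, $P$ being a prime factor of $K$, we cannot have $K=P$; hence $K$ is composite. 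The crossing-number bound $\crn(K)<x\cdot\crn(P)$ holds by construction, so it remains only to prove that $K$ is strongly diagrammatically prime.

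I would establish this by contradiction. Suppose some minimal diagram $D$ of $K$ is not prime. Since $K$ is a nontrivial knot, $D$ is connected, so non-primeness provides a circle meeting $D$ transversely in two points that cuts off no simple subarc on either side. This exhibits $D$ as a diagrammatic connected sum $D=D_1\# D_2$, where $D_i$ represents a knot $K_i$ and $K=K_1\# K_2$. As the decomposing circle carries no crossings, $\crn(D)=\crn(D_1)+\crn(D_2)$; combining $\crn(D_i)\ge\crn(K_i)$ with the minimality $\crn(D)=\crn(K)$ yields $\crn(K)\ge\crn(K_1)+\crn(K_2)$. Neither summand is trivial: if, say, $K_2$ were trivial then $\crn(K_2)=0$ while $D_2$, not being a simple subarc, carries at least one crossing, forcing $\crn(K)=\crn(D_1)+\crn(D_2)\ge\crn(K_1)+1=\crn(K)+1$, a contradiction. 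Hence $K_1$ and $K_2$ are both nontrivial, and in particular $\crn(K_2)\ge 3$.

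Finally I would invoke Schubert's unique prime factorization: since $P$ is prime and a factor of $K=K_1\# K_2$, it is a factor of $K_1$ or of $K_2$, say of $K_1$. Then
\[
\crn(K_1)\le\crn(K)-\crn(K_2)\le\crn(K)-3<\crn(K)<x\cdot\crn(P),
\]
so $K_1\in\mathcal{K}$ has strictly smaller crossing number than $K$, contradicting the choice of $K$. Thus no minimal diagram of $K$ is non-prime, i.e., $K$ is strongly diagrammatically prime, and $K$ is the required composite knot. The step I expect to be the main obstacle is the middle one: verifying that a non-prime minimal diagram genuinely splits into two \emph{nontrivial} knotted summands whose diagram crossing numbers add to that of $D$. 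This rests on the additivity of the crossing number across a diagrammatic connected sum together with the observation that a crossing-free side of the decomposing circle is exactly a simple subarc, so that a minimal diagram admits no trivial-but-nonsimple factor.
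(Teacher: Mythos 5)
Your proposal is correct and follows essentially the same route as the paper: choose a knot of minimal crossing number among those having $P$ as a factor and crossing number below $x\cdot\crn(P)$, observe it is composite since $x\le 1$, and rule out a non-prime minimal diagram via diagrammatic connected sum plus Schubert's unique factorization, contradicting minimality. The only difference is that you spell out details the paper merely asserts (additivity of crossings across the decomposing circle and nontriviality of both summands), which is a welcome elaboration rather than a deviation.
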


\begin{proof}
We say that a knot~$K$ is \emph{$P_x$-special} if $P$ is a factor of $K$ and $\crn(K)< x\cdot\crn(P)$.
If $P$ a non-$x$-regular knot, then the set of $P_x$-special knots is nonempty by definition. 
Let $K$ be a $P_x$-special knot with minimal crossing number in the set of $P_x$-special knots.
Since we have $x\le 1$, it follows that $K\neq P$ and hence $K$ is composite.
If there exists a composite minimal diagram of~$K$,
then $K$ is a sum of two nontrivial knots~$K_1$ and $K_2$ such that $\crn(K_1)< \crn(K)$ and $\crn(K_2)< \crn(K)$. 
Since $P$ is a factor of $K$, it follows by the Unique Factorization Theorem by~\cite{Schu49} that $P$ is a factor of either~$K_1$ or~$K_2$.
Without loss of generality we can assume that $P$ is a factor of~$K_1$.
Since $\crn(K_1)< \crn(K)<x\cdot\crn(P)$, it follows that $K_1$ is $P_x$-special, which contradicts the assumption that $K$ is a $P_x$-special knot with minimal crossing number.
This contradiction shows that all diagrams of~$K$ are prime.
This completes the proof.
\end{proof}


\begin{lem}
\label{lem:NRPKSDPK-2}
Let $\{N_xRPK_r\}$ denote the set of all non-$x$-regular prime knots of $r$ or fewer crossings, and let $\{SDPK_r\}$ denote the set of all strongly diagrammatically prime knots of $r$ or fewer crossings. 
Then for any $r>0$ and $x\in(0,1]$ we have
\begin{equation}
\label{eq:NRPKSDPK}
\card\{N_xRPK_r\}~\le~\frac{\card\{SDPK_{rx}\}}{152\cdot x}.
\end{equation}
\end{lem}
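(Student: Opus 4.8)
The plan is to relate the counting function for non-$x$-regular prime knots to the counting function for strongly diagrammatically prime knots via the map produced by Lemma~\ref{lem:NRPKSDPK-1}. That lemma sends each non-$x$-regular prime knot $P$ to \emph{some} strongly diagrammatically prime composite knot $K$ with $\crn(K)<x\cdot\crn(P)$ and $P$ as a factor. If $P$ has $r$ or fewer crossings, then the associated $K$ has fewer than $rx$ crossings, so $K$ lies in $\{SDPK_{rx}\}$. The inequality \eqref{eq:NRPKSDPK} will then follow once I control how many distinct $P\in\{N_xRPK_r\}$ can be sent to the same $K$.

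\medskip\noindent\textbf{Bounding the fibers of the map.}
First I would fix a target $K\in\{SDPK_{rx}\}$ and estimate the number of non-$x$-regular prime knots $P$ with $\crn(P)\le r$ that get mapped to $K$. Any such $P$ is a factor of $K$, so the key is to bound the number of distinct prime factors a knot $K$ can have in terms of $\crn(K)$. By the Unique Factorization Theorem \cite{Schu49}, $K$ decomposes uniquely into prime summands, and each nontrivial prime summand contributes at least some definite amount to the crossing number. Here is where I expect to invoke the quantitative additivity-type estimate of \cite{La09}, namely that crossing number is $\frac1{152}$-subadditive, which forces each prime factor $P$ of $K$ to satisfy $\crn(P)\le 152\,\crn(K)$; more to the point, the total crossing number of $K$ caps the number of prime factors, giving at most $152\,\crn(K)\le 152\cdot rx$ factors counted with multiplicity. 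Since the fiber over $K$ consists of factors of $K$, this produces a uniform bound of the shape $152\cdot rx$ on the fiber size.

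\medskip\noindent\textbf{Assembling the estimate.}
Combining the two steps, I would write
\[
\card\{N_xRPK_r\}~\le~\sum_{K\in\{SDPK_{rx}\}}(\text{fiber size over }K)~\le~(152\cdot rx)\cdot\card\{SDPK_{rx}\},
\]
which is off from the claimed \eqref{eq:NRPKSDPK} by a factor of $r^2x^2$ and in the wrong direction for the $\frac1{152\,x}$ appearing on the right. This tells me the naive fiber-counting is too lossy, and the intended argument must instead be an \emph{injectivity-up-to-bounded-multiplicity} statement that gains rather than loses the factor $152\,x$.

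\medskip\noindent\textbf{The main obstacle, and the likely correct route.}
The hard part will be extracting the denominator $152\cdot x$ rather than a large numerator. I suspect the correct approach reverses the roles: instead of bounding how many $P$ map to one $K$, one shows that the assignment $P\mapsto K$ is close to injective while the crossing-number \emph{deflation} from $P$ (with $\le r$ crossings) to $K$ (with $<rx$ crossings) concentrates the image into the much smaller set $\{SDPK_{rx}\}$. Concretely, I would try to make $P\mapsto K$ genuinely injective (for instance by recording, together with $K$, the additional data specifying which factor $P$ is, and arguing this data is determined), and then exploit the $\frac1{152}$-regularity of \cite{La09} in the form $152\,\crn(K)\ge\crn(P)$, i.e. $\crn(P)\le 152\,\crn(K)$, to relate the two crossing-number scales. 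The factor $\frac1{152\cdot x}$ strongly suggests that one counts $\{SDPK_{rx}\}$ and then divides by the minimal ``amplification'' $152\,x$ coming from passing from the reduced scale $rx$ back to scale $r$ via the universal subadditivity constant $\frac1{152}$. Pinning down exactly which quantity is injective and which is merely bounded-to-one is the crux; everything else is bookkeeping with Lemma~\ref{lem:NRPKSDPK-1} and the constants from \cite{La09}.
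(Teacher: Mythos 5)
Your skeleton matches the paper's: both use Lemma~\ref{lem:NRPKSDPK-1} to produce a map $f\colon\{N_xRPK_r\}\to\{SDPK_{rx}\}$ and then try to control the fibers of~$f$. The genuine gap is in your fiber bound, and the idea you are missing is this: you bounded $\card(f^{-1}(K))$ by the total number of prime factors of~$K$, using only that each factor contributes at least a bounded amount to $\crn(K)$; that is what produces the lossy, $r$-dependent bound $152\,\crn(K)\le 152\,rx$. The paper instead exploits the defining inequality of~$f$ \emph{for every element of the fiber}: if $P'\in f^{-1}(K)$, then $\crn(K)=\crn(f(P'))<x\cdot\crn(P')$, so every knot in the fiber has crossing number strictly larger than $\crn(K)/x$ --- the fiber consists only of \emph{large} factors of~$K$. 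Concretely, let $m=\card(f^{-1}(K))$ and let $P\in f^{-1}(K)$ have the smallest crossing number in the fiber. The fiber consists of $m$ distinct prime factors of~$K$, so by unique factorization \cite{Schu49} the knot $K$ is a connected sum of these $m$ knots (and possibly more), and Lackenby's inequality \eqref{eq:Lackenby152} gives $m\cdot\crn(P)/152\le\crn(K)<x\cdot\crn(P)$, hence $m<152\,x$, a bound independent of~$r$. This is the step that kills the factor of~$r$. Your proposed pivot to an ``injectivity-up-to-bounded-multiplicity'' argument is a dead end: no injectivity is needed or available ($f$ can genuinely be many-to-one, just boundedly so), and recording ``which factor $P$ is'' does not by itself bound anything.

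A secondary remark: you correctly observed that fiber counting yields a multiplicative bound, $\card\{N_xRPK_r\}\le 152\,x\cdot\card\{SDPK_{rx}\}$, and not the stated \eqref{eq:NRPKSDPK} with $152\,x$ in the denominator. In fact the paper's own proof establishes exactly the multiplicative form; the division in the statement does not follow from the fiber bound and appears to be an inaccuracy. It is harmless downstream, because the lemma is only used through $\limsup_{r\to\infty}\sqrt[r]{\card\{N_xRPK_r\}}$, where any constant factor (whether $152\,x$ or $1/(152\,x)$) disappears under the $r$-th root. So the real content of the lemma is a fiber bound \emph{uniform in~$r$}, and that is precisely what your argument lacks.
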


\begin{proof}
If $\{N_xRPK_r\}$ is empty, then~\eqref{eq:NRPKSDPK} is trivially true.
If $\{N_xRPK_r\}$ is not empty, then Lemma~\ref{lem:NRPKSDPK-1} implies that there exists a function
$$
f\colon \{N_xRPK_r\}\to \{SDPK_{rx}\}
$$
sending $P\in\{N_xRPK_r\}$ to a strongly diagrammatically prime composite knot $f(P)$ with factor~$P$ such that  
\begin{equation}
\label{eq:f-def}
\crn(f(P))~<~ x\cdot\crn(P).
\end{equation}
A~result of~\cite{La09} states that for any set of (oriented) knots $K_1$, $\dots$, $K_n$ in the $3$-sphere we have  
\begin{equation}
\label{eq:Lackenby152}
\frac{\crn(K_1)+\dots+\crn(K_n)}{152}~\le~\crn(K_1\sharp\dots\sharp K_n).
\end{equation}
If $K$ lies in the codomain~$f(\{N_xRPK_r\})$, let $P$ be a knot in $f^{-1}(K)$ 
having the smallest crossing number 
among the elements of $f^{-1}(K)$.  
Then~\eqref{eq:Lackenby152} implies that
\begin{equation}
\label{eq:Lack-cor}
\frac{\card(f^{-1}(K))\cdot\crn(P)}{152}~\le~\crn(K).
\end{equation}
We observe that \eqref{eq:f-def} and~\eqref{eq:Lack-cor} imply that for each knot $K$ in $\{SDPK_{rx}\}$ we have
\begin{equation*}
\card(f^{-1}(K))~<~152\cdot x.
\end{equation*}
This implies the required inequality.
\end{proof}

Lemma~\ref{lem:NRPKSDPK-2} implies that for any $r>0$ and $x\in(0,1]$ we have
\begin{multline}
\label{eq:NRPKSDPK-4}
\limsup_{r\to\infty}\sqrt[r]{\card\{N_xRPK_r\}}
~\le~
\limsup_{r\to\infty}\sqrt[r]{\frac{\card\{SDPK_{rx}\}}{152\cdot x}}\\
~=~
\limsup_{r\to\infty}\sqrt[r]{\card\{SDPK_{rx}\}}~=~
\left(\limsup_{r\to\infty}\sqrt[rx]{\card\{SDPK_{rx}\}}\right)^x\\
~=~
\left(\limsup_{r\to\infty}\sqrt[r]{\card\{SDPK_{r}\}}\right)^x.
\end{multline}

We observe that
$$
\limsup_{r\to\infty}\sqrt[r]{\card\{SDPK_{r}\}}
~\le~
\limsup_{r\to\infty}\sqrt[r]{r\cdot\card\{sdpk_{r}\}}
~=~
\limsup_{r\to\infty}\sqrt[r]{\card\{sdpk_{r}\}},
$$
where $\{sdpk_{r}\}$ 
is the set of all strongly diagrammatically prime knots of precisely~$r$ crossings. 
Estimates for $\limsup_{r\to\infty}\sqrt[r]{\card\{sdpk_{r}\}}$ can be derived from the papers~\cite{Wel92, STh98, St04}. These papers contain estimates for the number of prime link diagrams and for the number of prime link diagrams modulo flypes. These estimates can be used for prime links and for diagrammatically prime links as well.
Results of~\cite{St04} (already used above) imply that
$$
\limsup_{r\to\infty}\sqrt[r]{\card\{sdpk_{r}\}}~\le~\frac{\sqrt{13681}+91}{20}~<~10.4.
$$
Therefore, we have
$$
\limsup_{r\to\infty}\sqrt[r]{\card\{N_xRPK_r\}}
~<~(10.4)^x.
$$
In particular, we have
$$
\limsup_{r\to\infty}\sqrt[r]{\card\{N_{\frac34}RPK_r\}}
~<~(10.4)^{\frac34}~<~5.8.
$$
This implies the required statement because results of~\cite{STh98} say that
$$
\liminf_{r\to\infty}\sqrt[r]{P_r}~\ge~\frac{\sqrt{21001}+101}{40}~>~6.14,
$$
where $P_r$ stands for the number of prime non-split links of $n$ or fewer crossings.

\end{document}